\newtheorem{theorem}{Theorem}[section]
\newtheorem{lemma}[theorem]{Lemma}
\newtheorem{proposition}[theorem]{Proposition}
\newtheorem{remark}[theorem]{Remark}
\newtheorem{example}[theorem]{Example}
\numberwithin{equation}{section}
\begin{document}
\title[NLS equation with singular electromagnetic potential]{Multiple solutions to nonlinear Schr\"{o}dinger equations with singular
electromagnetic potential}

\author{M\'{o}nica Clapp}
\address{Instituto de Matem\'{a}ticas, Universidad Nacional Aut\'{o}noma de
M\'{e}xi\-co, Circuito Exterior, C.U., 04510 M\'{e}xico D.F., Mexico}
\email{mclapp@matem.unam.mx}
\author{Andrzej Szulkin}
\address{Department of Mathematics, Stockholm University, 106 91 Stockholm, Sweden}
\email{andrzejs@math.su.se}
\thanks{The research of M\'{o}nica Clapp was partially supported by CONACYT grant
129847 and UNAM-DGAPA-PAPIIT grant IN106612 (Mexico).}
\thanks{\indent The research of Andrzej Szulkin was partially supported by a grant
from Swedish Research Council.}
\date{}
\maketitle

\begin{center}
\emph{To Kazik G\c{e}ba on the occasion of his birthday, with friendship and
great esteem.}
\end{center}

\begin{abstract}
We consider the semilinear electromagnetic Schr\"{o}dinger equation%
\[
(-i\nabla+\mathcal{A}(x))^{2}u+V(x)u=|u|^{2^{\ast}-2}u,\text{\qquad}u\in
D_{\mathcal{A},0}^{1,2}(\Omega,\mathbb{C}),
\]
where $\Omega=(\mathbb{R}^{m}\smallsetminus\{0\})\times\mathbb{R}^{N-m}$ with
$2\leq m\leq N,$ $N\geq3$, $2^{\ast}:=\frac{2N}{N-2}$ is the critical Sobolev
exponent, $V$ is a Hardy term and $\mathcal{A}$ is a singular magnetic
potential of a particular form which includes the Aharonov-Bohm potentials.
Under some symmetry assumptions on $\mathcal{A}$ we obtain multiplicity of
solutions satisfying certain symmetry properties.

\medskip

\textsc{2010 Mathematics Subject Classification:} Primary 35Q55; Secondary 35J61.

\medskip

\textsc{Keywords:} Nonlinear Schr\"{o}dinger equation, electromagnetic
potential, Aharonov-Bohm potential, gauge invariance, symmetry properties.

\end{abstract}

\baselineskip15pt

\section{Introduction}

\label{intro}

Our purpose is to extend some results contained in \cite{at} and \cite{cs}. We
consider the semilinear electromagnetic Schr\"{o}dinger problem
\begin{equation}
\left\{
\begin{array}
[c]{l}%
(-i\nabla+\mathcal{A}(x))^{2}u+V(x)u=|u|^{2^{\ast}-2}u,\\
u\in D_{\mathcal{A},0}^{1,2}(\Omega,\mathbb{C}),
\end{array}
\right.  \label{i1}%
\end{equation}
where $\Omega$ is a domain in $\mathbb{R}^{N},$ $N\geq3$, $2^{\ast}:=\frac
{2N}{N-2}$ is the critical Sobolev exponent, $(-i\nabla+\mathcal{A})^{2}+V$ is
the Hamiltonian for a (non-relativistic) charged particle in an
electromagnetic field, $\mathcal{A}$ is the magnetic (or vector) potential and
$V$ the electric (or scalar) potential for this field. The space
$D_{\mathcal{A},0}^{1,2}(\Omega,\mathbb{C})$ is the closure of $C_{c}^{\infty
}(\Omega,\mathbb{C})$ with respect to the norm
\[
\Vert u\Vert_{\mathcal{A}}:=\left(  \int_{\Omega}|\nabla_{\mathcal{A}}%
u|^{2}\right)  ^{1/2},
\]
where $\nabla_{\mathcal{A}}u:=\nabla u+i\mathcal{A}u.$ If $\Omega
=\mathbb{R}^{N}$, we suppress the index $0$ and write $D_{\mathcal{A}}%
^{1,2}(\mathbb{R}^{N},\mathbb{C})$.

Recall that if $\mathcal{A}=(\mathcal{A}_{1},\ldots,\mathcal{A}_{N})$ is a
vector field defined in some open subset $\Omega$ of $\mathbb{R}^{N}$, then
curl$\mathcal{A}$ is the $N\times N$ skew-symmetric matrix $\mathcal{B}$ with
entries $\mathcal{B}_{jk}=\partial_{j}\mathcal{A}_{k}-\partial_{k}%
\mathcal{A}_{j}$, or in the language of differential forms, if $\mathcal{A}$
is the 1-form $\mathcal{A}_{1}dx_{1}+\cdots+\mathcal{A}_{N}dx_{N}$, then
$\mathcal{B}=d\mathcal{A}$. If $\Omega$ is simply connected and
curl$\mathcal{A}=0$ then $\mathcal{A}=\nabla\varphi$ for some $\varphi$.

In what follows $\mathcal{A}$ will be singular and $V$ will be a Hardy-type potential.

As usual, we denote by $O(m)$ the group of linear isometries of $\mathbb{R}%
^{m}$ and by $SO(m)$ the subgroup of those whose determinant is $1.$ We
identify $SO(2)$ with the group of unit complex numbers $\mathbb{S}^{1}$
acting by multiplication on $\mathbb{C\equiv R}^{2}.$

We start by considering a problem with a point singularity:
\begin{equation}
\left\{
\begin{array}
[c]{l}%
(-i\nabla+\mathcal{A})^{2}u-\frac{\sigma}{|x|^{2}}u=|u|^{2^{\ast}-2}u,\\
u\in D_{\mathcal{A},0}^{1,2}(\Omega,\mathbb{C}),
\end{array}
\right.  \label{point}%
\end{equation}
where $\Omega:=\mathbb{R}^{N}\smallsetminus\{0\}$ and $\mathcal{A}$ is of the
form
\begin{equation}
\mathcal{A}(x)=\frac{\widetilde{\mathcal{A}}(\omega)}{\left\vert x\right\vert
},\qquad\omega:=\frac{x}{\left\vert x\right\vert }\text{\quad and\quad
}\widetilde{\mathcal{A}}\in L^{\infty}(\mathbb{S}^{N-1},\mathbb{R}^{N}).
\label{i2}%
\end{equation}
We fix a closed subgroup $G$ of $O(N)$ and a continuous homomorphism of groups
$\tau:G\rightarrow\mathbb{S}^{1},$ and denote by $Gx:=\{gx:g\in G\}$ the
$G$-orbit of a point $x\in\mathbb{R}^{N}$ and by $G_{x}:=\{g\in G:gx=x\} $ its
isotropy group. We write $\#Gx$ for the cardinality of $Gx$ and $\ker\tau$ for
the kernel of $\tau.$ We assume that $\mathcal{A}$ is $G$-equivariant, that
is,
\begin{equation}
\mathcal{A}(gx)=g\mathcal{A}(x)\text{\qquad for all }x\in\Omega,\text{ }g\in
G. \label{i3}%
\end{equation}
We prove the following result.

\begin{theorem}
\label{thmpoint}Assume that $\#Gx=\infty$ for every $x\in\mathbb{R}%
^{N}\smallsetminus\{0\}$ and that there exists $x^{\ast}\in\mathbb{R}^{N}$
such that $G_{x^{\ast}}\subset\ker\tau.$ Then, if $\mathcal{A}$ satisfies
\emph{(\ref{i2})}, \emph{(\ref{i3})} and $\sigma<\left(  \frac{N-2}{2}\right)
^{2}$, there exists a nontrivial solution $u$ of problem \emph{(\ref{point})}
such that
\begin{equation}
u(gx)=\tau(g)u(x)\text{\qquad}\forall g\in G,\text{ }\forall x\in\Omega.
\label{eq:point}%
\end{equation}
\end{theorem}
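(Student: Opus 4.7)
The plan is to obtain a solution as a minimizer of the energy functional
\[
J(u)=\tfrac12\int_\Omega\bigl(|\nabla_{\mathcal{A}} u|^2-\tfrac{\sigma}{|x|^2}|u|^2\bigr)-\tfrac{1}{2^{\ast}}\int_\Omega |u|^{2^{\ast}}
\]
on the Nehari manifold inside the closed $\tau$-equivariant subspace
\[
H_\tau:=\{u\in D^{1,2}_{\mathcal{A},0}(\Omega,\mathbb{C}):u(gx)=\tau(g)u(x)\ \forall g\in G\}.
\]
Since $J$ is invariant under the $\tau$-twisted $G$-action $(g\cdot u)(x):=\overline{\tau(g)}\,u(g^{-1}x)$, the principle of symmetric criticality will upgrade such a constrained minimizer to a critical point of $J$ on the full space, hence to a weak solution of (\ref{point}). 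First I would dispatch the standing functional-analytic ingredients: Hardy's inequality together with the diamagnetic estimate $|\nabla|u||\le|\nabla_{\mathcal{A}}u|$ and the pinching condition $\sigma<((N-2)/2)^{2}$ make the quadratic form in $J$ equivalent to $\|\cdot\|_{\mathcal{A}}^{2}$; and $H_\tau$ is nontrivial, as can be seen by averaging a bump $\varphi\in C_{c}^{\infty}$ supported near $x^{\ast}$ (with its $G$-translates disjoint) against $\overline{\tau(g)}$---the hypothesis $G_{x^{\ast}}\subset\ker\tau$ ensures that the averaged function does not vanish.

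The heart of the proof is the treatment of the lack of compactness, which has two sources. The nonlinearity $|u|^{2^{\ast}-2}u$ is of critical Sobolev growth, and the problem is invariant under the dilations $u\mapsto\lambda^{(N-2)/2}u(\lambda\,\cdot\,)$ inherited from the homogeneities of $\mathcal{A}$ and of the Hardy term. I would remove the scaling ambiguity by the Emden--Fowler change of variables $x=e^{t}\omega$, $v(t,\omega):=e^{(N-2)t/2}u(e^{t}\omega)$, which turns (\ref{point}) into an equation on the cylinder $\mathcal{C}:=\mathbb{R}\times\mathbb{S}^{N-1}$ that is invariant under $t$-translations and under the $G$-action on the $\mathbb{S}^{N-1}$ factor.

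On $\mathcal{C}$ I would analyse a Palais--Smale sequence $(v_n)$ at the Nehari level $c_\tau$ via a profile decomposition. Two failure modes may appear. A concentrating bubble at some $(\bar t,\bar\omega)\in\mathcal{C}$ would, by $\tau$-equivariance, come with a disjoint companion bubble at each point of the orbit $G\bar\omega$; since $\#Gx=\infty$ for $x\neq 0$ forces $\#G\bar\omega=\infty$, such a configuration violates the uniform energy bound, so no bubbling survives inside $H_\tau$. The remaining non-compactness is the $t$-translation invariance. Using Lions's concentration-compactness on $\mathcal{C}$, vanishing is impossible because $c_\tau$ is bounded below by a positive Sobolev-type constant, and dichotomy is ruled out by a strict subadditivity argument: each drifting piece would itself be a $\tau$-equivariant solution with energy $\ge c_\tau$, which combined with $c_\tau>0$ contradicts $c_\tau\ge 2c_\tau$. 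Hence $(v_n)$ is tight modulo a translation and converges to a $\tau$-equivariant $v\neq 0$ solving the cylinder equation; the Emden--Fowler inverse then yields the desired solution $u$.

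I expect the subtlest point to be the strict subadditivity step. While morally a consequence of the superquadratic character of the nonlinearity and of the identity $J(u)=\frac1N\int|u|^{2^{\ast}}$ on the Nehari manifold, it requires checking that the weak limits of translated profiles still satisfy the $\tau$-equivariance constraint (they do, because the $G$-action and the $t$-translations commute) and that the magnetic gradient and Hardy singularity interact correctly with the decomposition---standard but somewhat technical verifications that should go through by adapting the arguments of \cite{cs}.
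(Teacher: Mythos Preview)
Your overall strategy---constrained minimization on the $\tau$-equivariant subspace followed by the principle of symmetric criticality---is exactly the paper's, but your compactness argument takes a genuinely different route. The paper (Theorem~\ref{existence}, of which Theorem~\ref{thmpoint} is the special case $m=M=N$) never passes to the cylinder: it stays in $\mathbb{R}^{N}$, takes a minimizing sequence for the Rayleigh quotient $\Vert u\Vert_{\mathcal{A},\sigma}^{2}/|u|_{2^{\ast}}^{2}$, and dilates it so that exactly half of the $L^{2^{\ast}}$-mass lies in $B_{1}(0)$. A $\tau$-equivariant concentration-compactness lemma (Lemma~\ref{cc}) then shows that exactly one of $|w|_{2^{\ast}}^{2^{\ast}}$, $\Vert\nu\Vert$, $\nu_{\infty}$ equals $1$; the half-mass normalization rules out $\nu_{\infty}=1$, and part~(e) of the lemma says that $\Vert\nu\Vert=1$ would force $\nu$ to concentrate on a \emph{finite} $G$-orbit, which here can only be $\{0\}$ and is again excluded by the same normalization. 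There is no Emden--Fowler transform, no profile decomposition, and no separate dichotomy or subadditivity step. Your cylinder approach should also go through and has the conceptual advantage of cleanly decoupling the dilation noncompactness (now $t$-translations) from Sobolev bubbling, but it trades the paper's single normalization trick for the more technical bookkeeping you yourself flag at the end (checking that translated profiles remain $\tau$-equivariant and each contributes energy at least $c_{\tau}$); the paper's route simply sidesteps that issue.
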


Recall that $\left(  \frac{N-2}{2}\right)  ^{2}$ is the Hardy constant.
Theorem \ref{thmpoint} is a special case of a result for more general singular
vector potentials which we state later (cf. Theorem \ref{existence}).

A function $u$ satisfying (\ref{eq:point}) will be called $\tau$%
-\emph{equivariant}. If $\tau\equiv1$ is the trivial homomorphism then
(\ref{eq:point}) simply says that $u$ is $G$-\emph{invariant}, i.e.%
\[
u(gx)=u(x)\text{\qquad}\forall g\in G,\text{ }\forall x\in\Omega.
\]
Condition (\ref{eq:point}) implies that the absolute value $\left\vert
u\right\vert $ of $u$ is $G$-invariant, whereas the phase of $u(gx)$ is that
of $u(x)$ multiplied by $\tau(g).$

The special case where $G=\mathbb{S}^{1}\times O(N-2),$ $N\geq4,$ and
$\tau=\tau_{n}$ with $\tau_{n}(e^{i\vartheta},h):=e^{in\vartheta}$,
$n\in\mathbb{Z}$, was considered by Abatangelo and Terracini in \cite{at}.
Note that, if $u_{n}$ satisfies (\ref{eq:point}) for this data and
$u_{n}(y,z)\neq0$ for some $y\in\mathbb{R}^{2}\smallsetminus\{0\}\equiv
\mathbb{C}\smallsetminus\{0\},$ $z\in\mathbb{R}^{N-2}$, then%
\[
u_{n}(e^{i\vartheta}y,z)=e^{in\vartheta}u_{n}(y,z)\neq0\text{\quad for all
\ }\vartheta\in\lbrack0,2\pi),
\]
i.e. $u_{n}$ restricted to the circle $\mathbb{S}^{1}(y,z):=\{(e^{i\vartheta
}y,z):\vartheta\in\lbrack0,2\pi)\}$ has
degree $n$ (as a mapping into $\mathbb{C}\smallsetminus\{0\}$). Therefore, if all $u_n$ are nontrivial a.e., then $u_{m}\neq u_{n}$ whenever $m\ne n$. So, in this case, Theorem \ref{thmpoint} gives
us infinitely many distinct solutions $u_{n},$ $n\in\mathbb{Z}$, having the
symmetries we just described.

For Aharonov-Bohm potentials we get a stronger result. Again, we write a point
in $\mathbb{R}^{N}$ as $(y,z)\in\mathbb{R}^{2}\times\mathbb{R}^{N-2}$ and
consider the problem
\begin{equation}
\left\{
\begin{array}
[c]{l}%
(-i\nabla+s\mathbb{A})^{2}u-\frac{\sigma}{\left\vert y\right\vert ^{2}%
}u=|u|^{2^{\ast}-2}u,\\
u\in D_{s\mathbb{A},0}^{1,2}(\Omega,\mathbb{C}),
\end{array}
\right.  \tag{$\wp_s$}\label{ab}%
\end{equation}
where $\Omega:=(\mathbb{R}^{2}\smallsetminus\{0\})\times\mathbb{R}^{N-2}%
\equiv(\mathbb{C}\smallsetminus\{0\})\times\mathbb{R}^{N-2},$ $N\geq3,$
$s,\sigma\in\mathbb{R},$ and $\mathbb{A}:\Omega\rightarrow\mathbb{R}^{N}$ is
the Aharonov-Bohm magnetic potential%
\begin{equation}
\mathbb{A}(y_{1},y_{2},z):=\frac{1}{\left\vert y\right\vert ^{2}}(-y_{2}%
,y_{1},0),\text{\qquad}y=(y_{1},y_{2}). \label{AB}%
\end{equation}
We assume that $\sigma<\kappa_{s}$, where $\kappa_{s}$ is the best constant
for the Hardy inequality%
\begin{equation}
\kappa_{s}\int_{\Omega}\frac{\left\vert \varphi\right\vert ^{2}}{\left\vert
y\right\vert ^{2}}\leq\int_{\Omega}\left\vert \nabla\varphi+is\mathbb{A}%
\varphi\right\vert ^{2}\text{\quad}\forall\varphi\in C_{c}^{\infty}%
(\Omega,\mathbb{C}). \label{i4}%
\end{equation}
We prove the following result.

\begin{theorem}
\label{thmunbdd}If $\sigma<\kappa_{s}$, then for every $n\in\mathbb{Z}$ there
exists a nontrivial solution $u_{n}$ of problem (\ref{ab}) with the following properties:

\begin{enumerate}
\item[(a)] $u_{0}=|u_{0}|$ and $|u_{n}|>0$ in $\Omega$,

\item[(b)] $|u_{n}| \neq|u_{m}|$ \ if $|s+m| \neq|s+n|,$

\item[(c)] $u_{n}(e^{i\vartheta}y,z)=e^{in\vartheta}u_{n}(y,z)$ for all
$e^{i\vartheta}\in\mathbb{S}^{1},$ $(y,z)\in\Omega,$

\item[(d)] $u_{n}(y,hz)=u_{n}(y,z)$ for all $h\in O(N-2),$ $(y,z)\in\Omega$,

\item[(e)] $\left\Vert u_{m}\right\Vert _{s\mathbb{A},\sigma}<\left\Vert
u_{n}\right\Vert _{s\mathbb{A},\sigma}$ if $\left\vert s+m\right\vert
<\left\vert s+n\right\vert .$
\end{enumerate}
\end{theorem}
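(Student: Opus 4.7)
The approach is gauge transformation. Let $\vartheta=\vartheta(y)$ denote the angular coordinate on $\mathbb{R}^{2}\smallsetminus\{0\}$, so that $\nabla\vartheta=\mathbb{A}$ on $\Omega$. For each $n\in\mathbb{Z}$ the map $w\mapsto e^{in\vartheta}w$ is a complex-linear isometric isomorphism $D_{(s+n)\mathbb{A},0}^{1,2}(\Omega,\mathbb{C})\to D_{s\mathbb{A},0}^{1,2}(\Omega,\mathbb{C})$, and a direct computation yields
\[
(-i\nabla+s\mathbb{A})(e^{in\vartheta}w)=e^{in\vartheta}\bigl((-i\nabla+(s+n)\mathbb{A})w\bigr).
\]
It sends solutions of $(\wp_{s+n})$ to solutions of $(\wp_{s})$, preserves moduli, and carries $\mathbb{S}^{1}\times O(N-2)$-invariant functions to $\tau_{n}$-equivariant, $O(N-2)$-invariant ones (where $\tau_{n}(e^{i\alpha},h):=e^{in\alpha}$). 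The theorem therefore reduces to producing, for each $n\in\mathbb{Z}$, one positive $\mathbb{S}^{1}\times O(N-2)$-invariant solution $w_{n}$ of $(\wp_{s+n})$ and setting $u_{n}:=e^{in\vartheta}w_{n}$.

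\textbf{Reduction to a real Hardy equation.} On $\mathbb{S}^{1}$-invariant $w$ the field $\mathbb{A}$ is tangential while $\nabla w$ has no angular component, so the magnetic cross term vanishes and
\[
\|w\|_{s'\mathbb{A},\sigma}^{2}=\int_{\Omega}|\nabla w|^{2}+\frac{(s')^{2}-\sigma}{|y|^{2}}|w|^{2},\qquad s':=s+n.
\]
Since $|\nabla|w||\leq|\nabla w|$ pointwise and $\int_{\Omega}|w|^{2^{\ast}}$ is unchanged under $w\mapsto|w|$, replacement by $|w|$ does not increase the Nehari energy, so a ground state can be taken real and non-negative; the problem becomes the real Hardy equation
\[
-\Delta w+\frac{(s+n)^{2}-\sigma}{|y|^{2}}w=w^{2^{\ast}-1}
\]
on the $\mathbb{S}^{1}\times O(N-2)$-invariant class. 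Coercivity of the quadratic form there follows by pulling $\sigma<\kappa_{s}$ back through the gauge isometry. Existence of such a ground state $w_{n}$ is then the content of the general symmetric existence result (Theorem \ref{existence}) applied with $G=\mathbb{S}^{1}\times O(N-2)$ and trivial homomorphism $\tau$; the strong maximum principle (whose coefficient is locally bounded on $\Omega$) upgrades $w_{n}\geq 0$ to $w_{n}>0$.

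\textbf{Verifying (a)--(e).} With $u_{n}:=e^{in\vartheta}w_{n}$, (a) gives $u_{0}=w_{0}>0$ and $|u_{n}|=w_{n}>0$; (c) follows from $e^{in\vartheta(e^{i\alpha}y)}=e^{in\alpha}e^{in\vartheta(y)}$; (d) from the $O(N-2)$-invariance of $w_{n}$ and $\vartheta$. For (e), write $c_{k}:=(s+k)^{2}-\sigma$ and let $E(c)$ denote the ground-state energy of the reduced Hardy problem with parameter $c$; the gauge isometry together with the Nehari identity gives
\[
\|u_{n}\|_{s\mathbb{A},\sigma}^{2}=\|w_{n}\|_{c_{n}}^{2}=\int_{\Omega}w_{n}^{2^{\ast}}=N\,E(c_{n}).
\]
Testing $w_{n}$ against the $c_{m}$-Nehari functional for $c_{m}<c_{n}$, the identity $\|w_{n}\|_{c_{m}}^{2}=\int_{\Omega}w_{n}^{2^{\ast}}-(c_{n}-c_{m})\int_{\Omega}w_{n}^{2}/|y|^{2}<\int_{\Omega}w_{n}^{2^{\ast}}$ yields, after rescaling to the Nehari manifold, $E(c_{m})<E(c_{n})$ strictly, which is (e). Finally (b) follows from (e): on the Nehari manifold, $\|u_{k}\|_{s\mathbb{A},\sigma}^{2}=\int_{\Omega}|u_{k}|^{2^{\ast}}$ depends on $u_{k}$ only through $|u_{k}|$, so $|u_{m}|=|u_{n}|$ would force $\|u_{m}\|_{s\mathbb{A},\sigma}=\|u_{n}\|_{s\mathbb{A},\sigma}$, contradicting (e) when $|s+m|\ne|s+n|$.

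\textbf{Main obstacle.} After the gauge reduction, the monotonicity claim (e) and hence (b) reduce to routine energy bookkeeping, and properties (a), (c), (d) are built into the construction. The substantive analytic input is the existence of the symmetric ground state $w_{n}$, where the loss of compactness of the Sobolev embedding (dilation concentration at $\{y=0\}$ and $z$-translation escape) must be controlled via the $\mathbb{S}^{1}\times O(N-2)$-symmetry. This is precisely the work done by Theorem \ref{existence}; the present result is in effect a gauge-theoretic repackaging of that theorem combined with strict monotonicity of the Hardy ground-state energy in its coefficient.
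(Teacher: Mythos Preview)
Your approach is essentially the paper's: gauge-shift $(\wp_s)$ to $(\wp_{s+n})$, find a positive $G$-invariant ground state there via Theorem~\ref{existence} and Lemma~\ref{AB1}, and multiply by $e^{in\vartheta}$. Your derivation of (b) from (e) via the Nehari identity is a minor but legitimate variant of the paper's direct argument (the paper instead observes that $v_{s+m}=v_{s+n}$ would force the two PDEs $-\Delta v+\frac{(s+m)^2-\sigma}{|y|^2}v=v^{2^*-1}$ and $-\Delta v+\frac{(s+n)^2-\sigma}{|y|^2}v=v^{2^*-1}$ to coincide).

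There is, however, a genuine gap when $N=3$. You invoke Theorem~\ref{existence} with $G=\mathbb{S}^1\times O(N-2)$, but for $N=3$ this group fails hypothesis $(H_1)$: with $M=N=3$, the point $\xi=(0,z)\in\mathbb{R}^3\smallsetminus\{0\}$ (any $z\neq 0$) has $G$-orbit $\{(0,z),(0,-z)\}$, which is finite. The concentration-compactness argument behind Theorem~\ref{existence} then cannot exclude concentration at such an orbit, and the theorem does not apply. The paper handles this case separately: it takes $m=M=2$ and $G=\mathbb{S}^1$ (for which $(H_1)$--$(H_4)$ do hold, using that $s\mathbb{A}$ is independent of $z$), obtains an $\mathbb{S}^1$-invariant positive solution, and then recovers the $O(1)$-symmetry in the $z$-variable a posteriori via the moving plane method. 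Your write-up needs this extra step for $N=3$; otherwise property (d) is unproved in that dimension.
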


For $N=3$ and $\sigma=0$ this result was proved in \cite{cs}. Properties (c)
and (d) can be expressed by saying that $u_{n}$ satisfies (\ref{eq:point}) for
$G:=\mathbb{S}^{1}\times O(N-2)$ and $\tau=\tau_{n},$ as defined above.

Existence of $\tau$-equivariant solutions can be obtained by constrained
minimization in some appropriate function space, see e.g.\ \cite{cc}. This is
how we proceed to prove Theorem \ref{thmpoint}. For Theorem \ref{thmunbdd} we
give a different argument: we obtain an $\mathbb{S}^{1}$-invariant solution by
minimization and then use the gauge invariance of the Aharonov-Bohm potential
to produce the other solutions. This allows us to establish not only existence
and the symmetry properties, but also all other properties stated in Theorem
\ref{thmunbdd}.

Our Theorems \ref{thmpoint} and \ref{thmunbdd} extend Theorem 1.1 in
\cite{at}. While in \cite{at} it is assumed that $|\sigma|$ is sufficiently
large and $\sigma<0$, we allow all $\sigma<\left(  \frac{N-2}{2}\right)  ^{2}$
and $\sigma<\kappa_{s}$ respectively. We also allow more general symmetries
and our arguments are considerably simpler. On the other hand, in \cite{at} it
was shown that, for $|\sigma|$ and $n$ large enough, there also exists a
solution which satisfies (\ref{eq:point}) with $\tau=\tau_{n}$ for all
$g\in\mathbb{Z}/n\times O(N-2)$ but not for all $g\in\mathbb{S}^{1}\times
O(N-2).$ This does not follow from our results.

Note that the Hardy term in (\ref{point}) is $SO(N)$-invariant. A natural
question arises whether there exist meaningful $SO(N)$-equivariant magnetic
potentials. The answer is contained in the next result.

\begin{theorem}
\label{rad} Let $\Omega\subset\mathbb{R}^{N}$ be a radially symmetric domain
and $\mathcal{A}\in L^{2}_{loc}(\Omega,\mathbb{R}^{N})$, $N\geq3.$

\begin{enumerate}
\item[(i)] If $\mathcal{A}(gx)=g\mathcal{A}(x)$ for all $g\in SO(N)$,
$x\in\Omega$, then $\mathcal{A}(x)=\nu(|x|)x$ for some real-valued function
$\nu$. In particular, $\mathcal{A}(x)=\nabla\varphi(x)$ for some
$SO(N)$-invariant $\varphi\in W^{1,2}_{loc}(\Omega,\mathbb{R})$.

\item[(ii)] If $\mathcal{B}=$ \emph{curl}$\mathcal{A}$ (in the sense of
distributions) for some $\mathcal{A}\in L_{loc}^{2}(\Omega,\mathbb{R}^{N})$
and $\mathcal{B}(gx)=g\mathcal{B}(x)g^{-1}$ for all $g\in SO(N)$, $x\in\Omega
$, then $\mathcal{B}=0$.
\end{enumerate}
\end{theorem}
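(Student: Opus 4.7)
The plan is to handle both parts via a common stabilizer argument, after which (i) is essentially immediate and the difficulty of (ii) concentrates in the case $N=3$.

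For part (i), I would fix $x\in\Omega\setminus\{0\}$ and use that the stabilizer $SO(N)_{x}$ is isomorphic to $SO(N-1)$, acting trivially on the line $\mathbb{R}x$ and via the standard representation on $x^{\perp}$. Applying the equivariance $\mathcal{A}(gx)=g\mathcal{A}(x)$ to $g\in SO(N)_{x}$ forces $\mathcal{A}(x)$ into the fixed subspace of this action, which is precisely $\mathbb{R}x$ (since $SO(N-1)$ has no nonzero fixed vectors in $\mathbb{R}^{N-1}$ for $N\ge 3$). Writing $\mathcal{A}(x)=\lambda(x)x$ a.e.\ and applying the equivariance once more with a general $g\in SO(N)$ yields $\lambda(gx)=\lambda(x)$, so $\lambda$ depends only on $|x|$, say $\lambda(x)=\nu(|x|)$. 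The potential $\varphi(x):=\Phi(|x|)$ with $\Phi'(r)=r\nu(r)$ then satisfies $\nabla\varphi=\mathcal{A}$, and $\mathcal{A}\in L^{2}_{loc}$ transfers to $\varphi\in W^{1,2}_{loc}$ by a one-dimensional integration.

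For part (ii), the same stabilizer argument applied to the conjugation action of $SO(N)$ on the skew-symmetric matrices $\mathfrak{so}(N)$ forces $\mathcal{B}(x)$ (at a.e.\ $x$) to be fixed by $SO(N)_{x}$. Writing $\mathfrak{so}(N)=\Lambda^{2}\mathbb{R}^{N}$ and decomposing at $x$ into the span of $\{x\wedge v:v\in x^{\perp}\}$ together with $\Lambda^{2}x^{\perp}$, the first summand is the standard representation of $SO(N-1)$ on $\mathbb{R}^{N-1}$ (no invariants for $N\ge 3$), and the second is $\Lambda^{2}\mathbb{R}^{N-1}\cong\mathfrak{so}(N-1)$, the adjoint representation, which has no nonzero invariants for $N\ge 4$. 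Therefore for $N\ge 4$ one concludes $\mathcal{B}=0$ a.e.\ directly.

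The case $N=3$ is where I expect the main obstacle to lie. The decomposition leaves a one-dimensional invariant subspace $\Lambda^{2}x^{\perp}$ at each $x\ne 0$, which via Hodge duality corresponds to the radial direction, so the Hodge-dual vector field is radial, $B(x)=\beta(|x|)x$. The hypothesis $\mathcal{B}=\curl\mathcal{A}$ forces $\operatorname{div}B=0$ distributionally, and the resulting ODE pins down $B(x)=Cx/|x|^{3}$ (the magnetic monopole field). To rule out $C\ne 0$ I would split into two subcases: if $0\in\Omega$, the classical identity $\operatorname{div}(x/|x|^{3})=4\pi\delta_{0}$ directly contradicts $\operatorname{div}B=0$ unless $C=0$; if $0\notin\Omega$, I would mollify $\mathcal{A}$, apply Stokes' theorem to the smooth approximations on an $SO(3)$-invariant sphere $S_{r}\subset\Omega$ to get $\int_{S_{r}}(\rho_{\varepsilon}*B)\cdot\hat n\,dS=0$, and pass to the limit using that $B$ is smooth in a neighborhood of $S_{r}$. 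Either way the flux $4\pi C$ must vanish, completing the proof.
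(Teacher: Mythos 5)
Your part (i) is the paper's own argument in slightly different language: the observation that the stabilizer $SO(N)_x\cong SO(N-1)$ fixes only $\mathbb{R}x$ is exactly the paper's choice of a rotation $\gamma$ about the line through $x$ with $\gamma x=x$, $\gamma\mathcal{A}_\tau(x)\neq\mathcal{A}_\tau(x)$, and the passage to $\nu=\nu(|x|)$ and the radial primitive is identical. Part (ii), however, is a genuinely different route. The paper deduces (ii) in one line from (i) together with its Proposition 5.1(ii): averaging $g^{-1}\mathcal{A}(gx)$ over $SO(N)$ with Haar measure produces an $SO(N)$-equivariant $\widehat{\mathcal{A}}\in L^2_{loc}$ with $\curl\widehat{\mathcal{A}}=\mathcal{B}$ (here the conjugation-equivariance of $\mathcal{B}$ is what makes the averaged curl come out right), and then (i) gives $\widehat{\mathcal{A}}=\nabla\varphi$, hence $\mathcal{B}=0$ uniformly for all $N\geq3$. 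You instead decompose $\Lambda^2\mathbb{R}^N$ under the stabilizer and argue directly; this buys a slightly stronger statement for $N\geq4$ (equivariance of $\mathcal{B}$ alone forces $\mathcal{B}=0$, no vector potential needed), but it costs you the separate $N=3$ analysis, where the hypothesis $\mathcal{B}=\curl\mathcal{A}$ with $\mathcal{A}\in L^2_{loc}$ is genuinely indispensable — the monopole field $Cx/|x|^3$ is equivariant and divergence-free away from the origin — and your two-pronged elimination of $C$ (the $\delta_0$ in $\operatorname{div}(x/|x|^3)$ if $0\in\Omega$; vanishing flux of $\curl(\rho_\varepsilon\ast\mathcal{A})$ through a sphere $S_r\subset\Omega$, passing to the limit using smoothness of $B$ near $S_r$, if $0\notin\Omega$) is correct. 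Two minor points to tighten, both at the same level of informality as the paper: the reduction of $\operatorname{div}B=0$ to the ODE $(r^3\beta)'=0$ should be done by testing against radial test functions (legitimate since connectedness of the radially symmetric $\Omega$ makes the radial range an interval), and the pointwise equivariance of the a priori only distributional $\mathcal{B}$ is being read, as in the paper, as saying that $\mathcal{B}$ is a measurable matrix field satisfying the identity a.e.
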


A consequence of this theorem is that such $\mathcal{A}$ can be gauged away by
an $SO(N)$-invariant gauge transformation, so the problem may be reduced to a
non-magnetic one. More precisely, there is a one-to-one correspondence between
radially symmetric solutions to (\ref{i1}) with $\mathcal{A}$ as above and to
(\ref{i1}) with $\mathcal{A}=0$. We also give an example showing that radial
solutions to (\ref{i1}) may exist even if $\mathcal{A}$ is not $SO(N)$-equivariant.

This paper is organized as follows: Section \ref{prel} contains preliminary
material. In section \ref{sec:existence} we state and prove an extension of
Theorem \ref{thmpoint}\ to more general singular magnetic potentials. Section
\ref{ahar} is devoted to the proof of Theorem \ref{thmunbdd}, and Theorem
\ref{rad} is proved in section \ref{sec:rad}.

\section{Preliminaries}

\label{prel}Let $2\leq m\leq N.$ A point in $\mathbb{R}^{N}$ will be written
as $(y,z)\in\mathbb{R}^{m}\times\mathbb{R}^{N-m}.$ In the following we assume
that
\[
\Omega=(\mathbb{R}^{m}\smallsetminus\{0\})\times\mathbb{R}^{N-m}
\]
and $\mathcal{A}$ is of the form%
\begin{equation}
\mathcal{A}(y,z)=\frac{\widetilde{\mathcal{A}}(\omega,z)}{|y|}\qquad\text{with
\ }\omega:=\frac{y}{\left\vert y\right\vert }\text{\quad and\quad}%
\widetilde{\mathcal{A}}\in L^{\infty}(\mathbb{S}^{m-1}\times\mathbb{R}%
^{N-m},\mathbb{R}^{N}), \label{A}%
\end{equation}
where $\mathbb{S}^{m-1}$ is the unit sphere in $\mathbb{R}^{m}$, centered at
the origin.\ Note that $\mathcal{A}$ in \eqref{i2} and $\mathbb{A}$ in
\eqref{AB} have this form, with $m=N$ and $m=2$ respectively.

\subsection{Basic inequalities}

If $u\in D_{\mathcal{A},0}^{1,2}(\Omega,\mathbb{C})$ then its absolute value
$\left\vert u\right\vert \in D_{0}^{1,2}(\Omega,\mathbb{R})$ and%
\begin{equation}
\left\vert \nabla\left\vert u\right\vert (x)\right\vert \leq\left\vert
\nabla_{\mathcal{A}}u(x)\right\vert \text{\qquad for a.e. }x\in\mathbb{R}^{N}.
\label{di}%
\end{equation}
This is called the \emph{diamagnetic inequality} \cite{ll}. Together with the
Sobolev inequality it yields%
\begin{equation}
S\left(  \int_{\Omega}\left\vert u\right\vert ^{2^{\ast}}\right)  ^{2/2^{\ast
}}\leq\int_{\Omega}\left\vert \nabla_{\mathcal{A}}u\right\vert ^{2},
\label{sob}%
\end{equation}
where $S$ is the best Sobolev constant.

The generalized Hardy inequality \cite{bt} says that
\begin{equation}
\left(  \frac{m-2}{2}\right)  ^{2}\int_{\mathbb{R}^{N}}\frac{\left\vert
v\right\vert ^{2}}{\left\vert y\right\vert ^{2}}\leq\int_{\mathbb{R}^{N}%
}\left\vert \nabla v\right\vert ^{2}\text{\qquad}\forall v\in D_{0}%
^{1,2}(\mathbb{R}^{N},\mathbb{R}). \label{ghardy}%
\end{equation}
The constant $\left(  \frac{m-2}{2}\right)  ^{2}$\ is optimal \cite{ssw}.
Combining this inequality with the diamagnetic inequality we obtain the
\emph{magnetic Hardy inequality}%
\begin{equation}
\left(  \frac{m-2}{2}\right)  ^{2}\int_{\Omega}\frac{\left\vert u\right\vert
^{2}}{\left\vert y\right\vert ^{2}}\leq\int_{\Omega}\left\vert \nabla
_{\mathcal{A}}u\right\vert ^{2}\text{\qquad}\forall u\in D_{\mathcal{A}%
,0}^{1,2}(\Omega,\mathbb{C}). \label{maghardy}%
\end{equation}
A consequence is that%
\[
\Vert u\Vert_{\mathcal{A},\sigma}:=\left(  \int_{\Omega}|\nabla_{\mathcal{A}%
}u|^{2}-\frac{\sigma}{\left\vert y\right\vert ^{2}}|u|^{2}\right)  ^{1/2}%
\]
are equivalent norms in $D_{\mathcal{A},0}^{1,2}(\Omega,\mathbb{C})$ for all
$\sigma<\left(  \frac{m-2}{2}\right)  ^{2}.$

If $m=2$ the inequality (\ref{maghardy}) becomes trivial. Yet for
Aharonov-Bohm potentials there are nontrivial Hardy inequalities. Laptev and
Weidl showed in \cite{lw}\ that $s\mathbb{A}$ in $\mathbb{R}^{2}$ satisfies
\[
\min_{k\in\mathbb{Z}}\left\vert k+s\right\vert ^{2}\int_{\mathbb{R}^{2}}%
\frac{\left\vert \varphi\right\vert ^{2}}{\left\vert y\right\vert ^{2}}%
\leq\int_{\mathbb{R}^{2}}\left\vert \nabla\varphi+is\mathbb{A}\varphi
\right\vert ^{2}\text{\qquad}\forall\varphi\in C_{c}^{\infty}(\mathbb{R}%
^{2}\smallsetminus\{0\},\mathbb{C}).
\]
Integrating this inequality with respect to $z$ we obtain the \emph{Hardy
inequality for Aharonov-Bohm potentials}
\begin{equation}
\min_{k\in\mathbb{Z}}\left\vert k+s\right\vert ^{2}\int_{\mathbb{R}^{N}}%
\frac{\left\vert \varphi\right\vert ^{2}}{\left\vert y\right\vert ^{2}}%
\leq\int_{\mathbb{R}^{N}}\left\vert \nabla_{s\mathbb{A}}\varphi\right\vert
^{2}\text{\qquad}\forall\varphi\in C_{c}^{\infty}(\Omega,\mathbb{C}).
\label{abhardy}%
\end{equation}
It was shown in \cite{at} that $\kappa_{s}:=\min_{k\in\mathbb{Z}}\left\vert
k+s\right\vert ^{2}\in\lbrack0,1/4]$ is the best constant for inequality
(\ref{abhardy}). Again, a consequence is that%
\[
\left\Vert u\right\Vert _{s\mathbb{A},\sigma}:=\left(  \int_{\Omega}\left\vert
\nabla_{s\mathbb{A}}u\right\vert ^{2}-\frac{\sigma}{\left\vert y\right\vert
^{2}}\left\vert u\right\vert ^{2}\right)  ^{1/2}%
\]
are equivalent norms in $D_{s\mathbb{A},0}^{1,2}(\Omega,\mathbb{C})$ for all
$\sigma<\kappa_{s}.$

Note that if $s\in\mathbb{Z}$ then $\kappa_{s}=0$ and the Hardy inequality
(\ref{abhardy}) becomes trivial. We would like to point out that in \cite{cs}
we claimed, \emph{incorrectly}, that $\kappa_{1}>0$. However, we never used
this fact.

\begin{remark}
\emph{ Using (\ref{abhardy}) it was shown in \cite[Section 5.2]{at} that if
$s\not \in \mathbb{Z}$, then $D_{s\mathbb{A},0}^{1,2}(\Omega,\mathbb{C})$ is a
proper subspace of $D^{1,2}(\mathbb{R}^{N},\mathbb{C})$ and the inclusion
mapping is continuous. We point out that, if $s\in\mathbb{Z}\smallsetminus
\{0\}$, the situation is different: $D_{s\mathbb{A},0}^{1,2}(\Omega
,\mathbb{C})$ and $D^{1,2}(\mathbb{R}^{N},\mathbb{C})$ are isometric but none
of them is contained in the other. To see this, let $\theta=\theta(y,z)$ be
the polar angle of $y$ in \eqref{AB} and note that $\nabla\theta=\mathbb{A}$.
Since $\mathbb{R}^{N}\smallsetminus\Omega$ has capacity zero, $D^{1,2}%
(\mathbb{R}^{N},\mathbb{C})=D_{0}^{1,2}(\Omega,\mathbb{C})$ (cf. Theorem 4.7.2
in \cite{eg} and Theorem 9.2.3 in \cite{ma}). Now it follows easily that
$u\mapsto e^{-is\theta}u$ maps $D^{1,2}(\mathbb{R}^{N},\mathbb{C})$
isometrically onto $D_{s\mathbb{A},0}^{1,2}(\Omega,\mathbb{C}).$ But if
$\varphi\in C_{c}^{\infty}(\mathbb{R}^{N},\mathbb{R})$ is such that
$\varphi(x)=1$ for $\left\vert x\right\vert \leq1$, then $\varphi$ is in
$D^{1,2}(\mathbb{R}^{N},\mathbb{C})$ but not in $D_{s\mathbb{A},0}%
^{1,2}(\Omega,\mathbb{C})$ because $1/\left\vert y\right\vert ^{2}$ is not
locally integrable at $0$ (in fact this shows that $D_{s\mathbb{A},0}%
^{1,2}(\Omega,\mathbb{C})\neq D^{1,2}(\mathbb{R}^{N},\mathbb{C})$ for any
$s\neq0$). On the other hand, $e^{-is\theta}\varphi$ is in $D_{s\mathbb{A}%
,0}^{1,2}(\Omega,\mathbb{C})$ but not in $D^{1,2}(\mathbb{R}^{N},\mathbb{C})$.
}
\end{remark}

\subsection{The symmetries}

Let $G$ be a closed subgroup of $O(N).$ We assume that $\Omega$ is
$G$-invariant (i.e. $gx\in\Omega$ for all $x\in\Omega,$ $g\in G$). Since
$\Omega=(\mathbb{R}^{m}\smallsetminus\{0\})\times\mathbb{R}^{N-m}$, this
implies that $\{0\}\times\mathbb{R}^{N-m}$ is $G$-invariant and, therefore,
$G$ must be a subgroup of $O(m)\times O(N-m).$ We also assume that
$\mathcal{A}$ is $G$-equivariant (i.e. $\mathcal{A}(gx)=g\mathcal{A}(x)$ for
all $x\in\Omega,$ $g\in G$). We fix a continuous homomorphism $\tau
:G\rightarrow\mathbb{S}^{1},$ and look for solutions which are $\tau
$-equivariant, i.e.%
\begin{equation}
u(gx)=\tau(g)u(x)\qquad\forall g\in G,\forall x\in\Omega. \label{tauequi}%
\end{equation}

\begin{example}
[Aharonov-Bohm]\label{exAB} \emph{If }$G:=\mathbb{S}^{1}\times O(N-2),$\emph{
the Aharonov-Bohm potentials }$s\mathbb{A}$\emph{ are }$G$\emph{-equivariant.
For each }$n\in\mathbb{Z}$\emph{ we consider the homomorphism }$\tau
_{n}:G\rightarrow\mathbb{S}^{1}$\emph{ given by}%
\[
\tau_{n}(e^{i\vartheta},h):=e^{in\vartheta}\text{\quad}\emph{\ for\ all\ }h\in
O(N-2).
\]
\emph{Note that }$u$\emph{ is }$\tau_{n}$\emph{-equivariant iff it satisfies
properties (c) and (d) of Theorem \ref{thmunbdd}}.
\end{example}

\begin{example}
[Point singularity]\label{expt} \emph{A magnetic potential of the form}%
\[
\mathcal{A}(x)=\frac{\widetilde{\mathcal{A}}\left(  \omega\right)
}{\left\vert x\right\vert }\text{\qquad\emph{with}\quad}\omega:=\frac
{x}{\left\vert x\right\vert },\text{\quad}\widetilde{\mathcal{A}}\in
L^{\infty}(\mathbb{S}^{N-1},\mathbb{R}^{N}),
\]
\emph{is }$G$\emph{-equivariant iff }$\widetilde{\mathcal{A}}$\emph{ is }%
$G$\emph{-equivariant, i.e. }$\widetilde{\mathcal{A}}(g\omega)=g\widetilde
{\mathcal{A}}(\omega)$\emph{ for all }$\omega\in\mathbb{S}^{N-1},$\emph{
}$g\in G.$\emph{ The case where }$G:=\mathbb{S}^{1}\times O(N-2)$\emph{ and
}$\tau=\tau_{n},$\emph{ as above, was considered in \cite{at}. But }$G$\emph{
and }$\tau$\emph{ may be more general. For example, if }$N=2j+k$\emph{ and
}$\Gamma$\emph{ is a subgroup of }$O(k)$\emph{, we may take }$G:=\mathbb{S}%
^{1}\times\Gamma,$\emph{ with }$\mathbb{S}^{1}$\emph{ acting by multiplication
on }$\mathbb{C}^{j}\equiv\mathbb{R}^{2j},$\emph{ and }$\tau_{n}(e^{i\vartheta
},h):=e^{in\vartheta}$\emph{ for all }$h\in\Gamma.$\emph{ Then }$u$\emph{ is
}$\tau_{n}$\emph{-equivariant iff}
\[
u(e^{i\vartheta}\xi_{1},\ldots,e^{i\vartheta}\xi_{j},h\xi^{\prime
})=e^{in\vartheta}u(\xi_{1},\ldots,\xi_{j},\xi^{\prime})\quad\text{\emph{for
all} }\xi_{1},\ldots,\xi_{j}\in\mathbb{C},\text{ }\xi^{\prime}\in
\mathbb{R}^{k}\emph{.}%
\]
\emph{Or we may take} $G:=\mathbb{S}^{1}\times\cdots\times\mathbb{S}^{1}$
\emph{(with }$j$\emph{ factors) and }$\tau_{n_{1},...,n_{j}}(e^{i\vartheta
_{1}},\ldots,e^{i\vartheta_{j}},h):=e^{in_{1}\vartheta_{1}}\cdots
e^{in_{j}\vartheta_{j}}$\emph{ to obtain solutions satisfying}%
\[
u(e^{i\vartheta_{1}}\xi_{1},\ldots,e^{i\vartheta_{j}}\xi_{j},h\xi^{\prime
})=e^{i\left(  n_{1}\vartheta_{1}+\cdots+n_{j}\vartheta_{j}\right)  }u(\xi
_{1},\ldots,\xi_{j},\xi^{\prime})\quad\text{\emph{for all} }\xi_{1},\ldots
,\xi_{j}\in\mathbb{C},\text{ }\xi^{\prime}\in\mathbb{R}^{k}\emph{.}%
\]

\end{example}

Solutions satisfying (\ref{tauequi}) may be obtained by choosing appropriate
subspaces of $D_{\mathcal{A},0}^{1,2}(\Omega,\mathbb{C}).$ To each $u\in
D_{\mathcal{A},0}^{1,2}(\Omega,\mathbb{C}),$ $g\in G,$ we associate the
function $g\cdot_{\tau}u\in D_{\mathcal{A},0}^{1,2}(\Omega,\mathbb{C})$
defined as follows:%
\[
\left(  g\cdot_{\tau}u\right)  (x):=\tau(g)u(g^{-1}x),\text{\qquad}x\in
\Omega.
\]
Then%
\[
\Vert g\cdot_{\tau}u\Vert_{\mathcal{A},\sigma}=\Vert u\Vert_{\mathcal{A}%
,\sigma}\text{\qquad and\qquad}|g\cdot_{\tau}u|_{2^{\ast}}=|u|_{2^{\ast}},
\]
where $\left\vert \, \cdot\,\right\vert _{p}$ denotes the $L^{p}%
(\Omega,\mathbb{C})$-norm. So this gives isometric $G$-actions on
$D_{\mathcal{A},0}^{1,2}(\Omega,\mathbb{C})$ and $L^{2^{\ast}}(\Omega
,\mathbb{C}).$ We write%
\[
D_{\mathcal{A},0}^{1,2}(\Omega,\mathbb{C})^{\tau}:=\{u\in D_{\mathcal{A}%
,0}^{1,2}(\Omega,\mathbb{C}):u(gx)=\tau(g)u(x)\text{\quad}\forall g\in
G,\text{ }\forall x\in\Omega\}
\]
for the fixed point set of this action, which is a closed subspace of
$D_{\mathcal{A},0}^{1,2}(\Omega,\mathbb{C}).$ If $\tau\equiv1$ we write
\[
D_{\mathcal{A},0}^{1,2}(\Omega,\mathbb{C})^{G}:=\{u\in D_{\mathcal{A},0}%
^{1,2}(\Omega,\mathbb{C}):u(gx)=u(x)\text{\quad}\forall g\in G,\text{ }\forall
x\in\Omega\}
\]
instead of $D_{\mathcal{A},0}^{1,2}(\Omega,\mathbb{C})^{\tau}.$

Some of our solutions will be obtained by constrained minimization. If
\[
\inf_{\substack{u\in D_{\mathcal{A},0}^{1,2}(\Omega,\mathbb{C})^{\tau}%
\\u\neq0}}\frac{\Vert u\Vert_{\mathcal{A},\sigma}^{2}}{|u|_{2^{\ast}}^{2}%
}=\inf\left\{  \Vert u\Vert_{\mathcal{A},\sigma}^{2}:u\in D_{\mathcal{A}%
,0}^{1,2}(\Omega,\mathbb{C})^{\tau},\ |u|_{2^{\ast}}^{2}=1\right\}
\]
is attained at some $\bar{u}$ with $|\bar{u}|_{2^{\ast}}=1$, then it follows
from the Lagrange multiplier rule and the principle of symmetric criticality
\cite{p,wi} that $u=\Vert\bar{u}\Vert_{\mathcal{A},\sigma}^{2/(2^{\ast}%
-2)}\bar{u}$ is a solution of (\ref{i1}) satisfying (\ref{tauequi}). Here it
is important to make sure that $D_{\mathcal{A},0}^{1,2}(\Omega,\mathbb{C}%
)^{\tau}$ is nontrivial. A necessary and sufficient condition for this is
given in the following result.

\begin{proposition}
\label{nontrivial} Let $G$ be a closed subgroup of $O(N)$, $\tau:
G\to\mathbb{S}^{1}$ a continuous homomorphism of groups and $\Omega
\subset\mathbb{R}^{N}$ a $G$-invariant domain. \newline(i) If there exists
$x^{*}\in\Omega$ such that $G_{x^{*}}\subset\ker\tau$, then $D_{\mathcal{A}%
,0}^{1,2}(\Omega,\mathbb{C})^{\tau}$ is infinite dimensional. \newline(ii) If
no $x^{*}$ as above exists, then $D_{\mathcal{A},0}^{1,2}(\Omega
,\mathbb{C})^{\tau} = \{0\}$.
\end{proposition}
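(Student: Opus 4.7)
The plan is to handle (ii) by a direct evaluation argument and to prove (i) via Haar averaging followed by a multiplicative trick using $|x|^2$ as a $G$-invariant.

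For (ii), suppose $u \in D_{\mathcal{A},0}^{1,2}(\Omega,\mathbb{C})^\tau$ is nonzero; since the defining equivariance relation is stated pointwise, there is some $x \in \Omega$ with $u(x) \ne 0$. For any $g \in G_x$ the relation $u(x) = u(gx) = \tau(g)u(x)$ forces $\tau(g) = 1$, so $G_x \subset \ker\tau$. Contrapositively, if no $x^*$ with $G_{x^*} \subset \ker\tau$ exists, every $\tau$-equivariant $u$ must vanish.

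For (i), since $G$ is a closed subgroup of the compact group $O(N)$ it is itself compact; fix a normalized Haar measure $\mu$ on $G$. For $\varphi \in C_c^\infty(\Omega,\mathbb{C})$ introduce the averaging operator
\[
P_\tau\varphi(x) := \int_G \tau(g)\varphi(g^{-1}x)\,d\mu(g).
\]
Left-invariance of $\mu$ together with the substitution $g = hg'$ yields $P_\tau\varphi(hx) = \tau(h)P_\tau\varphi(x)$, while compactness of $G$ and $G$-invariance of $\Omega$ place $P_\tau\varphi$ in $C_c^\infty(\Omega,\mathbb{C}) \subset D_{\mathcal{A},0}^{1,2}(\Omega,\mathbb{C})$. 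Set $H := G_{x^*} \subset \ker\tau$. Since $\tau$ is continuous and $\tau|_H \equiv 1$, the set $\{g \in G : \operatorname{Re}\tau(g) > 1/2\}$ is an open neighborhood of $H$; by compactness of $G$ and continuity of $g \mapsto g^{-1}x^*$, the set $\{g : g^{-1}x^* \in B(x^*,r)\}$ lies in this neighborhood for $r$ sufficiently small. Choosing $\varphi \in C_c^\infty(\Omega,\mathbb{R})$ with $\varphi \ge 0$, $\varphi(x^*) > 0$, and $\operatorname{supp}\varphi \subset B(x^*,r)$ then gives
\[
\operatorname{Re} P_\tau\varphi(x^*) \;\ge\; \tfrac{1}{2}\int_G \varphi(g^{-1}x^*)\,d\mu(g) \;>\; 0,
\]
so $w := P_\tau\varphi$ is a nonzero $\tau$-equivariant test function.

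To pass from one element to infinitely many independent ones, note that $|x|^2$ is $G$-invariant (as $G \subset O(N)$), so $f(|x|^2)w(x)$ remains $\tau$-equivariant for every $f \in C^\infty((0,\infty))$. The open set $\{w \ne 0\}$ is nonempty, and $\nabla|x|^2 = 2x \ne 0$ on $\Omega$, so its image $J \subset (0,\infty)$ under $x \mapsto |x|^2$ is a nonempty open set. Pick a sequence $\{f_k\}_{k \ge 1} \subset C_c^\infty(J)$ with $\operatorname{supp} f_k$ pairwise disjoint and each $f_k \not\equiv 0$ on a subinterval of $J$; then $u_k(x) := f_k(|x|^2)w(x)$ belong to $C_c^\infty(\Omega,\mathbb{C})^\tau$, have pairwise disjoint supports, and are each nonzero, hence form an infinite linearly independent family in $D_{\mathcal{A},0}^{1,2}(\Omega,\mathbb{C})^\tau$. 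The principal obstacle is the nonvanishing of $P_\tau\varphi$: the hypothesis $G_{x^*} \subset \ker\tau$ enters essentially there, since otherwise integrating the phases $\tau(g)$ over the stabilizer would cancel the contribution near $x^*$. Once this is secured, the infinite-dimensionality argument via $G$-invariant radial cutoffs is routine.
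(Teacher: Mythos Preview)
Your proof is correct and follows a genuinely different route from the paper. Part (ii) is handled identically in both. For (i), the paper invokes the principal orbit type theorem (tom Dieck) to pass to a point of principal isotropy type, takes a $G$-invariant tubular neighbourhood $N$ of its orbit, and defines $\widetilde\varphi(gx):=\tau(g)\varphi(x)$ for $\varphi\in C_c^\infty(N_{x^*},\mathbb{C})$; infinite-dimensionality then comes directly from the infinite-dimensional space of such $\varphi$ on the fibre. Your argument replaces this transformation-group machinery by Haar averaging to produce a single nonzero $\tau$-equivariant test function, followed by multiplication with $G$-invariant radial cutoffs $f_k(|x|^2)$ having disjoint supports. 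This is more elementary in that it requires no structure theory of compact Lie group actions, and the nonvanishing step (where the hypothesis $G_{x^*}\subset\ker\tau$ enters to prevent phase cancellation) is transparent. The paper's approach, by contrast, yields an explicit parametrisation of a large family of equivariant test functions, which can be convenient elsewhere. One minor point: your assertion that $\nabla|x|^2=2x\neq0$ on $\Omega$ presumes $0\notin\Omega$; this holds in the paper's specific setting but not for a general $G$-invariant domain. The fix is immediate, since $\{w\neq0\}$ is open and nonempty and therefore contains a ball about some $x_0\neq0$, on which $x\mapsto|x|^2$ is a submersion.
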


\begin{proof}
(i) This result has been shown in \cite{ccs}, see the proof of Theorem 1.1
there. For the reader's convenience we include a (slightly different)
argument. Recall that two groups $H,K \subset G$ are called conjugate in $G$
if $K=g^{-1}Hg$ for some $g\in G$ and denote the conjugacy class of $H$ by
$(H)$. By Theorem 5.14 in Chapter I of \cite{td} there exists a unique
conjugacy class $(P)$, $P\subset G$, such that the set
\[
\Omega_{(P)} := \{x\in\mathbb{R}^{N}: G_{x} \text{ is conjugate to } P \text{
in }G\}
\]
is open and dense in $\Omega$. Moreover, if $x\in\Omega\smallsetminus
\Omega_{(P)}$, then $P$ is conjugate to a (proper) subgroup of $G_{x}$. Let
$x^{*}\in\Omega_{(P)}$, let $N$ be an open $G$-invariant tubular neighborhood
of $G_{x^{*}}$ in $\Omega$ and denote the fiber over $x^{*}$ in $N$ by
$N_{x^{*}}$. Then $\dim N_{x^{*}} = N-\dim G_{x^{*}} > 0$. For each
$\varphi\in C_{c}^{\infty}(N_{x^{*}},\mathbb{C})$ we can now define
\[
\widetilde\varphi(gx) := \tau(g)\varphi(x), \quad x\in N_{x^{*}}, \ g\in G.
\]
Since $G_{x}$ is conjugate to $P$, $G_{x}\subset\ker\tau$ and it follows that
$\widetilde\varphi$ is well defined. Clearly, the space of all $\widetilde
\varphi$ as above is an infinite dimensional subspace of $C_{c}^{\infty
}(\Omega,\mathbb{C})$. \newline(ii) If for each $x\in\Omega$ there exists
$g\in G_{x}$ such that $\tau(g)\ne1$, then $u(x)=u(gx) = \tau(g)u(x)$ and,
hence, $u(x)=0$.
\end{proof}

As an example that (ii) can occur we may take $G=O(N)$ and the determinant of
$g$ as $\tau(g)$. Then for each $x\in\Omega$ there exists a $g_{x}\in O(N)$
with $g_{x}x=x$ and $\tau(g_{x})=-1.$

\subsection{Concentration-compactness}

An essential tool in our proofs will be the concentration-compactness lemma
which we now formulate. Let $\mathcal{M}(\mathbb{R}^{N})$ denote the space of
finite measures in $\mathbb{R}^{N}$ and set
\[
S_{\mathcal{A},\sigma}^{\tau}:=\inf_{\substack{u\in D_{\mathcal{A},0}%
^{1,2}(\Omega,\mathbb{C})^{\tau}\\u\neq0}}\frac{\Vert u\Vert_{\mathcal{A}%
,\sigma}^{2}}{|u|_{2^{\ast}}^{2}}.
\]
If $\tau\equiv1$ we write $S_{\mathcal{A},\sigma}^{G}$ instead of
$S_{\mathcal{A},\sigma}^{\tau}.$

\begin{lemma}
[Concentration-compactness]\label{cc} Let $\sigma<\left(  \frac{m-2}%
{2}\right)  ^{2}$ or $\sigma<\kappa_{s}$ if $\mathcal{A}=s\mathbb{A}$, and let
$(u_{n})$ be a sequence in $D_{\mathcal{A},0}^{1,2}(\Omega,\mathbb{C})^{\tau}$
such that%
\[%
\begin{array}
[c]{rl}%
u_{n}\rightharpoonup u & \text{weakly in }D_{\mathcal{A},0}^{1,2}%
(\Omega,\mathbb{C}),\\
|\nabla_{\mathcal{A}}(u_{n}-u)|^{2}-\frac{\sigma}{\left\vert y\right\vert
^{2}}|u_{n}-u|^{2}\rightharpoonup\mu & \text{weakly in }\mathcal{M}%
(\mathbb{R}^{N}),\\
|u_{n}-u|^{2^{\ast}}\rightharpoonup\nu & \text{weakly in }\mathcal{M}%
(\mathbb{R}^{N}),\\
u_{n}(x)\rightarrow u(x) & \text{a.e. in }\mathbb{R}^{N}.
\end{array}
\]
Set
\begin{gather*}
\mu_{\infty}:=\lim_{R\rightarrow\infty}\,\limsup_{n\rightarrow\infty}%
\int_{|x|\geq R}\left(  |\nabla_{\mathcal{A}}u_{n}|^{2}-\frac{\sigma
}{\left\vert y\right\vert ^{2}}|u_{n}|^{2}\right) \\
\text{and}\quad\nu_{\infty}:=\lim_{R\rightarrow\infty}\,\limsup_{n\rightarrow
\infty}\int_{|x|\geq R}|u_{n}|^{2^{\ast}}.
\end{gather*}
Then, the following hold:

\begin{enumerate}
\item[(a)] $S_{\mathcal{A},\sigma}^{\tau}\left\Vert \nu\right\Vert
^{2/2^{\ast}}\leq\Vert\mu\Vert$.

\item[(b)] $S_{\mathcal{A},\sigma}^{\tau}\nu_{\infty}^{2/2^{\ast}}\leq
\mu_{\infty}$.

\item[(c)] $\limsup_{n\rightarrow\infty}\Vert u_{n}\Vert_{\mathcal{A},\sigma
}^{2}=\Vert u\Vert_{{\mathcal{A},\sigma}}^{2}+\left\Vert \mu\right\Vert
+\mu_{\infty}$.

\item[(d)] $\limsup_{n\rightarrow\infty}|u_{n}|_{2^{\ast}}^{2^{\ast}%
}=|u|_{2^{\ast}}^{2^{\ast}}+\left\Vert \nu\right\Vert +\nu_{\infty}$.

\item[(e)] If $u=0$ and $S_{\mathcal{A},\sigma}^{\tau}\left\Vert
\nu\right\Vert ^{2/2^{\ast}}=\Vert\mu\Vert$, then $\mu$ and $\nu$ are
concentrated at a single finite $G$-orbit of $\mathbb{R}^{N}$ or are zero.
\end{enumerate}
\end{lemma}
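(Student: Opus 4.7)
The plan is to follow the classical Lions second concentration-compactness strategy, adapted to the magnetic Sobolev--Hardy framework with $\tau$-equivariance. Set $v_n := u_n - u$; by the $G$-invariance of $\Omega$ we still have $v_n \in D^{1,2}_{\mathcal{A},0}(\Omega,\mathbb{C})^\tau$, and $v_n \rightharpoonup 0$ with $v_n \to 0$ a.e. Parts (c) and (d) will be obtained from Brezis--Lieb together with the Hilbert-space expansion
\[
\|u_n\|^2_{\mathcal{A},\sigma} = \|v_n\|^2_{\mathcal{A},\sigma} + \|u\|^2_{\mathcal{A},\sigma} + o(1),
\]
which is valid because $\|\cdot\|_{\mathcal{A},\sigma}$ is an equivalent Hilbert norm on $D^{1,2}_{\mathcal{A},0}(\Omega,\mathbb{C})$ by \eqref{maghardy} (or \eqref{abhardy}). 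One then identifies $\lim_n \|v_n\|^2_{\mathcal{A},\sigma} = \|\mu\|+\mu_\infty$ and $\lim_n \int|v_n|^{2^\ast} = \|\nu\|+\nu_\infty$ by splitting integrals at a large ball $B_R$, applying weak-$\ast$ convergence of the measures on $\overline{B}_R$, and invoking the very definitions of $\mu_\infty,\nu_\infty$ for the complements.

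The key remark for (a) is that multiplication by a real, $G$-invariant cutoff $\varphi \in C_c^\infty(\mathbb{R}^N)$ preserves $\tau$-equivariance, so $\varphi v_n \in D^{1,2}_{\mathcal{A},0}(\Omega,\mathbb{C})^\tau$ and
\[
S_{\mathcal{A},\sigma}^\tau \Bigl(\int|\varphi|^{2^\ast}|v_n|^{2^\ast}\Bigr)^{2/2^\ast} \le \|\varphi v_n\|_{\mathcal{A},\sigma}^2.
\]
Expanding $\nabla_{\mathcal{A}}(\varphi v_n) = \varphi\nabla_{\mathcal{A}} v_n + (\nabla\varphi) v_n$ and using Rellich--Kondrachov (so $v_n \to 0$ in $L^2_{\mathrm{loc}}$), the cross term and the $|\nabla\varphi|^2|v_n|^2$ term vanish in the limit, yielding the reverse Hölder inequality
\[
S_{\mathcal{A},\sigma}^\tau \Bigl(\int|\varphi|^{2^\ast}\,d\nu\Bigr)^{2/2^\ast} \le \int|\varphi|^2\,d\mu.
\]
Taking a $G$-invariant sequence $\varphi_R \uparrow 1$ gives (a). For (b) the dual choice is used: apply the same inequality, this time to $\psi_R u_n$, with $\psi_R$ a $G$-invariant smooth cutoff that vanishes on $B_R$ and equals $1$ outside $B_{2R}$ (for $R$ large $\psi_R$ is supported away from $\{y=0\}$, so the Hardy term is harmless). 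Passing to $\limsup_n$ and then $R \to \infty$, the cross terms disappear by Rellich on annuli and one recovers $S_{\mathcal{A},\sigma}^\tau \nu_\infty^{2/2^\ast} \le \mu_\infty$.

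For (e), assume $u=0$ and $S_{\mathcal{A},\sigma}^\tau \|\nu\|^{2/2^\ast} = \|\mu\|$. The reverse Hölder inequality derived above, together with a standard Lions measure-theoretic argument, forces $\nu$ to be purely atomic, $\nu = \sum_j \nu_j \delta_{x_j}$, with $\mu \ge S_{\mathcal{A},\sigma}^\tau \sum_j \nu_j^{2/2^\ast} \delta_{x_j}$. Because $|v_n|^{2^\ast}$ and $|\nabla_{\mathcal{A}} v_n|^2 - \sigma|y|^{-2}|v_n|^2$ are $G$-invariant (the equivariant phase cancels in absolute value and in the magnetic energy density, using the $G$-equivariance of $\mathcal{A}$), the limit measures $\mu,\nu$ are $G$-invariant; hence their atoms group into $G$-orbits carrying constant weights. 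An infinite orbit would give $\|\nu\| = \infty$, so each supporting orbit is finite, and the concavity inequality $(\sum_k a_k)^{2/2^\ast} \le \sum_k a_k^{2/2^\ast}$ (since $2/2^\ast<1$) combined with the equality hypothesis forces all but one orbit to carry zero mass. The chief technical point is Step 1, which requires a Brezis--Lieb type identity for the magnetic Hardy norm and careful handling of the mass escaping to infinity via $\mu_\infty,\nu_\infty$; the other delicate point is ensuring in Step 3 that the admissibility of $\psi_R u_n$ persists in the Aharonov--Bohm case where $\kappa_s$ may be small or zero, which is why the assumption $\sigma<\kappa_s$ (forcing $\sigma<0$ when $s\in\mathbb{Z}\setminus\{0\}$) is invoked.
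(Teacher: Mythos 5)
Your outline follows the same standard route that the paper itself delegates to \cite{cs}, \cite{as} and \cite{wa}, but it skips precisely the point that the paper singles out as the non-obvious one, and the argument you give for (e) has a real flaw. When $\sigma>0$ the density $|\nabla_{\mathcal A}v_n|^2-\sigma|y|^{-2}|v_n|^2$ is signed, so it is not clear that $\mu$ is a positive measure, nor that the limit in $R$ defining $\mu_\infty$ exists (the inner $\limsup_n$ need not be monotone in $R$) or is nonnegative. Yet your step for (a) ($\int\varphi_R^2\,d\mu\to\Vert\mu\Vert$) and your identification $\limsup_n\Vert v_n\Vert^2_{\mathcal A,\sigma}=\Vert\mu\Vert+\mu_\infty$ for (c) use exactly these facts. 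This is the content of Lemma \ref{positivity} in the paper: one applies the Hardy inequality \eqref{ghardy} or \eqref{abhardy} to $\sqrt{\varphi_\varepsilon}\,v_n$, with the smoothing $\varphi_\varepsilon=(\sqrt{\varphi+\varepsilon^2}-\varepsilon)^2$, and to $\psi_R u_n$, and obtains the existence of $\mu_\infty$ from the two-sided estimate \eqref{p10}. That is where the hypothesis $\sigma<\left(\frac{m-2}{2}\right)^2$, resp.\ $\sigma<\kappa_s$, really enters --- not through ``admissibility'' of $\psi_R u_n$ (multiplication by a bounded Lipschitz cutoff preserves the space for any $\sigma$). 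Note also that for $m<N$ the singular set $\{y=0\}=\{0\}\times\mathbb R^{N-m}$ is unbounded, so your claim that $\psi_R$ is eventually supported away from it is false in the Aharonov--Bohm case; this is harmless only because a real cutoff creates no cross term with the Hardy potential.

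In (e) you deduce that $\nu=\sum_j\nu_j\delta_{x_j}$ from the reverse H\"older inequality, but you established that inequality only for $G$-invariant test functions with the constant $S^\tau_{\mathcal A,\sigma}$. Invariant test functions cannot distinguish a Dirac mass from a diffuse $G$-invariant measure spread over an infinite orbit (for instance normalized arc-length on a circle orbit), so point-atomicity does not follow, and your subsequent step ``an infinite orbit would give $\Vert\nu\Vert=\infty$'' only excludes infinitely many equal point atoms --- it says nothing about diffuse concentration along an orbit, which is exactly what (e) must rule out. The gap is filled by observing that $\varphi v_n\in D^{1,2}_{\mathcal A,0}(\Omega,\mathbb C)$ for every cutoff $\varphi$, invariant or not, so the same computation gives the reverse H\"older inequality with the infimum taken over the whole space $D^{1,2}_{\mathcal A,0}(\Omega,\mathbb C)$, a positive constant by \eqref{sob} and the equivalence of norms; Lions' measure-theoretic argument then yields genuine point atoms, the $G$-invariance of $\nu$ forces each atom onto a finite orbit (otherwise the total mass would indeed be infinite), and your concavity/equality argument with $S^\tau_{\mathcal A,\sigma}$ concludes. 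With these two repairs your sketch matches the proof the paper refers to.
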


The proof uses the same argument as in \cite{cs} which, in turn, is a
modification of that in \cite{as}, see also \cite{wa} where the
concentration-compactness lemma for $G$-invariant functions was introduced.
There are, however, two facts which were obvious in \cite{cs} but need to be
verified here. If $\sigma>0$, then it is not clear that $\mu$ is a positive
measure and that $\mu_{\infty}$ is well defined and nonnegative. We show this
in the next lemma.

\begin{lemma}
\label{positivity} $\mu_{\infty}\ge0$ and $\mu$ is a positive measure.
\end{lemma}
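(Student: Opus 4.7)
My plan relies on a single input from Section~\ref{prel}: the magnetic (respectively Aharonov-Bohm) Hardy inequality, which guarantees that $\Vert w\Vert_{\mathcal{A},\sigma}^{2}\ge 0$ for every $w\in D^{1,2}_{\mathcal{A},0}(\Omega,\mathbb{C})$. I will apply this with $w=\psi v$ for judicious real cut-offs $\psi\in C_c^{\infty}(\mathbb{R}^N)$. The computational core is the identity
\begin{equation}
\Vert\psi v\Vert_{\mathcal{A},\sigma}^{2}=\int\psi^{2}\Bigl(|\nabla_{\mathcal{A}} v|^{2}-\frac{\sigma}{|y|^{2}}|v|^{2}\Bigr)-\int\psi\,\Delta\psi\,|v|^{2},\label{eq:keyid-plan}
\end{equation}
valid for $v\in D^{1,2}_{\mathcal{A},0}(\Omega,\mathbb{C})$ and $\psi\in C_c^{\infty}(\mathbb{R}^N,\mathbb{R})$. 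It follows by expanding $\nabla_{\mathcal{A}}(\psi v)=\psi\nabla_{\mathcal{A}} v+v\nabla\psi$, noting that $\operatorname{Re}((\nabla_{\mathcal{A}} v)\bar v)=\tfrac{1}{2}\nabla|v|^{2}$ (because $\mathcal A$ is real-valued), and integrating by parts once.

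For the positivity of $\mu$ I apply \eqref{eq:keyid-plan} to $v=v_n:=u_n-u$. Nonnegativity of the left-hand side gives
$$\int\psi^{2}\Bigl(|\nabla_{\mathcal{A}} v_n|^{2}-\frac{\sigma}{|y|^{2}}|v_n|^{2}\Bigr)\ge\int\psi\,\Delta\psi\,|v_n|^{2}.$$
The diamagnetic inequality \eqref{di} combined with Rellich's theorem and the a.e.\ convergence $v_n(x)\to 0$ yields $v_n\to 0$ in $L^{2}_{\mathrm{loc}}$, so the right-hand side tends to $0$; the left-hand side converges to $\int\psi^{2}\,d\mu$ by hypothesis. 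Hence $\int\psi^{2}\,d\mu\ge 0$ for every real $\psi\in C_c^{\infty}$. Approximating a nonnegative $\varphi\in C_c^{\infty}$ by $(\chi\sqrt{\varphi+\varepsilon})^{2}=\varphi+\varepsilon\chi^{2}$, with $\chi\in C_c^{\infty}$ equal to $1$ on $\operatorname{supp}\varphi$, and letting $\varepsilon\to 0^{+}$, then upgrades this to $\mu\ge 0$ as a Radon measure.

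For $\mu_\infty\ge 0$ I apply \eqref{eq:keyid-plan} to $v=u_n$ with $\psi=\eta_R\in C^\infty(\mathbb{R}^N)$ satisfying $\eta_R\equiv 0$ on $\{|x|\le R\}$, $\eta_R\equiv 1$ on $\{|x|\ge 2R\}$, $|\nabla\eta_R|\le C/R$, $|\Delta\eta_R|\le C/R^{2}$. Nonnegativity of the left-hand side yields
$$\int\eta_R^{2}\Bigl(|\nabla_{\mathcal{A}} u_n|^{2}-\frac{\sigma}{|y|^{2}}|u_n|^{2}\Bigr)\ge\int\eta_R\,\Delta\eta_R\,|u_n|^{2},$$
and the right-hand side is bounded in modulus by $CR^{-2}\int_{R\le|x|\le 2R}|u_n|^{2}\le C\bigl(\int_{|x|\ge R}|u_n|^{2^{\ast}}\bigr)^{2/2^{\ast}}$ by H\"older, using $|\{R\le|x|\le 2R\}|\sim R^{N}$. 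Taking $\limsup_n$ and then $R\to\infty$, this vanishes because $u\in L^{2^{\ast}}$ and $\limsup_n\int_{|x|\ge R}|v_n|^{2^{\ast}}\searrow\nu_\infty<\infty$. A parallel estimate on the transition annulus $\{R\le|x|\le 2R\}$ permits replacing the smooth $\eta_R^{2}$ by the sharp indicator $\mathbf 1_{\{|x|\ge R\}}$ at the cost of an error of the same order, simultaneously showing that the double limit defining $\mu_\infty$ exists and is nonnegative. This last point is the principal obstacle: when $\sigma>0$ the integrand is not sign-definite, so existence of the limit in $R$ is not automatic from monotonicity and has to be extracted from the Hardy-controlled negative part together with uniform $L^{2^{\ast}}$ tightness.
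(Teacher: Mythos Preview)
Your treatment of $\mu\ge 0$ is correct and essentially the paper's: both apply the Hardy inequality to $\psi v_n$ with $v_n=u_n-u$, expand, and kill the lower-order terms using $v_n\to 0$ in $L^2_{\mathrm{loc}}$. Your integrated-by-parts identity $\|\psi v\|_{\mathcal A,\sigma}^2=\int\psi^2(|\nabla_{\mathcal A}v|^2-\sigma|y|^{-2}|v|^2)-\int\psi\,\Delta\psi\,|v|^2$ is one step beyond the paper's expansion (which keeps the cross term explicit), and your regularization $(\chi\sqrt{\varphi+\varepsilon})^2$ is a harmless variant of the paper's $(\sqrt{\varphi+\varepsilon^2}-\varepsilon)^2$.

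For $\mu_\infty$ there is one slip and one incomplete step. The slip: after H\"older on the annulus you enlarge to the tail $\int_{|x|\ge R}|u_n|^{2^{\ast}}$, but $\lim_{R\to\infty}\limsup_n\int_{|x|\ge R}|u_n|^{2^{\ast}}=\nu_\infty$, which need not be zero, so your stated error does \emph{not} vanish. Keep the annulus bound $C\bigl(\int_{R\le|x|\le 2R}|u_n|^{2^{\ast}}\bigr)^{2/2^{\ast}}$ instead; this does go to zero since $\limsup_n\int_K|v_n|^{2^{\ast}}\le\nu(K)$ for compact $K$ and $\nu(\{R\le|x|\le 2R\})\to 0$.

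The incomplete step is the ``parallel estimate'' passing from $\eta_R^2$ to $\mathbf 1_{\{|x|\ge R\}}$ and establishing that the outer $R$-limit exists. The ingredients you name are not enough as stated: for $m<N$ the weight $|y|^{-2}$ is unbounded on $\{R\le|x|\le 2R\}$, so $L^{2^{\ast}}$ tightness alone does not control $\int_{R\le|x|\le 2R}|u_n|^2/|y|^2$, and the annulus integral of $|\nabla_{\mathcal A}u_n|^2$ is not addressed at all. One can rescue the argument by first extracting a subsequence along which $|\nabla_{\mathcal A}u_n|^2\,dx$ and $|u_n|^2|y|^{-2}\,dx$ converge weak-$*$ in $\mathcal M(\mathbb R^N)$ (both are bounded in $L^1$, the latter by Hardy), so that each annulus integral is dominated by the limit measure of a shell that shrinks to nothing as $R\to\infty$; but this is not what you wrote. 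The paper takes a different route: it first derives
\[
\limsup_n\|u_n\|_{\mathcal A,\sigma}^2=\|u\|_{\mathcal A,\sigma}^2+\|\mu\|+\lim_{R\to\infty}\limsup_n\int\psi_R^2\Bigl(|\nabla_{\mathcal A}u_n|^2-\tfrac{\sigma}{|y|^2}|u_n|^2\Bigr),
\]
which forces the smooth double limit on the right to exist (and yields part (c) of the concentration-compactness lemma for free), and then sandwiches $\int_{|x|>R}(\cdots)$ between the $\psi_R$- and $\psi_{R-1}$-weighted versions---these coincide after $R\to\infty$---leaving only a Hardy-term shell correction $I_R(u_n)=\sigma\int(\psi_{R-1}^2-\psi_R^2)|u_n|^2/|y|^2$ to control.
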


\begin{proof}
The arguments below are adaptations of those in the Appendix of \cite{sw} (for
$\mu_{\infty}$) and Lemma 2.3 in \cite{csw} (for $\mu$). Since the conclusions
are obvious if $\sigma\leq0$, we assume that $\sigma>0$.

First we show that $\mu$ is a positive measure. Let $\varphi\in C_{c}^{\infty
}(\mathbb{R}^{N},\mathbb{R})$ be $G$-invariant, $\varphi\geq0$, and put
$\varphi_{\varepsilon}:=(\sqrt{\varphi+\varepsilon^{2}}-\varepsilon)^{2}$
($\varepsilon>0$) and $v_{n}:=u_{n}-u$. Using the Hardy inequality
(\ref{ghardy}) or (\ref{abhardy}) and the fact that $\sqrt{\varphi
_{\varepsilon}}$ is differentiable we obtain
\begin{align}
\label{meas}0  &  \leq\int_{\mathbb{R}^{N}}\left(  |\nabla_{\mathcal{A}}%
(\sqrt{\varphi_{\varepsilon}}v_{n})|^{2}-\frac{\sigma}{\left\vert y\right\vert
^{2}}|\sqrt{\varphi_{\varepsilon}}v_{n}|^{2}\right) \\
&  =\int_{\mathbb{R}^{N}}\varphi_{\varepsilon}\left(  |\nabla_{\mathcal{A}%
}v_{n}|^{2}-\frac{\sigma}{\left\vert y\right\vert ^{2}}|v_{n}|^{2}\right)
+2\operatorname{Re}\int_{\mathbb{R}^{N}}\sqrt{\varphi_{\varepsilon}}\bar
{v}_{n}\nabla_{\mathcal{A}}v_{n}\cdot\nabla(\sqrt{\varphi_{\varepsilon}%
})\nonumber\\
&  \qquad+\int_{\mathbb{R}^{N}}|\nabla(\sqrt{\varphi_{\varepsilon}}%
)|^{2}|v_{n}|^{2}.\nonumber
\end{align}
Since $v_{n}\rightarrow0$ in $L_{loc}^{2}(\mathbb{R}^{N},\mathbb{C})$, the
second and the third term on the right-hand side above go to $0$ as
$n\rightarrow\infty$. So
\[
\langle\mu, \varphi_{\varepsilon}\rangle= \lim_{n\to\infty} \int
_{\mathbb{R}^{N}}\varphi_{\varepsilon}\left(  |\nabla_{\mathcal{A}}v_{n}%
|^{2}-\frac{\sigma}{\left\vert y\right\vert ^{2}}|v_{n}|^{2}\right)  \geq0
\quad\text{and}\quad\langle\mu, \varphi\rangle\ge0
\]
because $\varphi_{\varepsilon}\to\varphi$ in $L^{\infty}(\mathbb{R}%
^{N},\mathbb{R})$ as $\varepsilon\to0$. This proves that $\mu\geq0$.

Next, we show that $\mu_{\infty}$ is well defined and nonnegative. Let
$\psi_{R}\in C^{\infty}(\mathbb{R}^{N},[0,1])$ be radially symmetric and such
that $\psi_{R}(x)=0$ for $\left\vert x\right\vert \leq R$, $\psi_{R}(x)=1$ for
$\left\vert x\right\vert \geq R+1$. Then
\begin{align}
&  \limsup_{n\rightarrow\infty}\Vert u_{n}\Vert_{\mathcal{A},\sigma}^{2}=\Vert
u\Vert_{\mathcal{A},\sigma}^{2}+\limsup_{n\rightarrow\infty}\Vert v_{n}%
\Vert_{\mathcal{A},\sigma}^{2}\label{infty}\\
&  =\Vert u\Vert_{\mathcal{A},\sigma}^{2}+\lim_{R\rightarrow\infty}%
\limsup_{n\rightarrow\infty}\left(  \int_{\mathbb{R}^{N}}(1-\psi_{R}%
^{2})\left(  |\nabla_{\mathcal{A}}v_{n}|^{2}-\frac{\sigma}{\left\vert
y\right\vert ^{2}}|v_{n}|^{2}\right)  \right. \nonumber\\
&  \qquad+\left.  \int_{\mathbb{R}^{N}}\psi_{R}^{2}\left(  |\nabla
_{\mathcal{A}}v_{n}|^{2}-\frac{\sigma}{\left\vert y\right\vert ^{2}}%
|v_{n}|^{2}\right)  \right) \nonumber\\
&  =\Vert u\Vert_{\mathcal{A},\sigma}^{2}+\lim_{R\rightarrow\infty}\left(
\int_{\mathbb{R}^{N}}(1-\psi_{R}^{2})\,d\mu+\limsup_{n\rightarrow\infty}%
\int_{\mathbb{R}^{N}}\psi_{R}^{2}\left(  |\nabla_{\mathcal{A}}v_{n}|^{2}%
-\frac{\sigma}{\left\vert y\right\vert ^{2}}|v_{n}|^{2}\right)  \right)
\nonumber\\
&  =\Vert u\Vert_{\mathcal{A},\sigma}^{2}+\Vert\mu\Vert+\lim_{R\rightarrow
\infty}\limsup_{n\rightarrow\infty}\int_{\mathbb{R}^{N}}\psi_{R}^{2}\left(
|\nabla_{\mathcal{A}}v_{n}|^{2}-\frac{\sigma}{\left\vert y\right\vert ^{2}%
}|v_{n}|^{2}\right) \nonumber\\
&  =\Vert u\Vert_{\mathcal{A},\sigma}^{2}+\Vert\mu\Vert+\lim_{R\rightarrow
\infty}\limsup_{n\rightarrow\infty}\int_{\mathbb{R}^{N}}\psi_{R}^{2}\left(
|\nabla_{\mathcal{A}}u_{n}|^{2}-\frac{\sigma}{\left\vert y\right\vert ^{2}%
}|u_{n}|^{2}\right)  ,\nonumber
\end{align}
where the last equality follows from the fact that
\[
\lim_{R\rightarrow\infty}\int_{\mathbb{R}^{N}}\psi_{R}^{2}\left(
|\nabla_{\mathcal{A}}u|^{2}-\frac{\sigma}{\left\vert y\right\vert ^{2}}%
|u|^{2}\right)  =0.
\]
Since
\[
\int_{\mathbb{R}^{N}}\psi_{R}^{2}|\nabla_{\mathcal{A}}u_{n}|^{2}\leq
\int_{|x|>R}|\nabla_{\mathcal{A}}u_{n}|^{2}\leq\int_{\mathbb{R}^{N}}\psi
_{R-1}^{2}|\nabla_{\mathcal{A}}u_{n}|^{2}%
\]
and
\[
\int_{\mathbb{R}^{N}}\psi_{R}^{2}\frac{|u_{n}|^{2}}{\left\vert y\right\vert
^{2}}\leq\int_{|x|>R}\frac{|u_{n}|^{2}}{\left\vert y\right\vert ^{2}}\leq
\int_{\mathbb{R}^{N}}\psi_{R-1}^{2}\frac{|u_{n}|^{2}}{\left\vert y\right\vert
^{2}},
\]
we have, setting $I_{R}(u_{n}):=\int_{\mathbb{R}^{N}}(\psi_{R-1}^{2}-\psi
_{R}^{2})\frac{\sigma}{\left\vert y\right\vert ^{2}}|u_{n}|^{2},$
\begin{align}
&  \int_{\mathbb{R}^{N}}\psi_{R}^{2}\left(  |\nabla_{\mathcal{A}}u_{n}%
|^{2}-\frac{\sigma}{\left\vert y\right\vert ^{2}}|u_{n}|^{2}\right)
-I_{R}(u_{n})\leq\int_{|x|>R}\left(  |\nabla_{\mathcal{A}}u_{n}|^{2}%
-\frac{\sigma}{\left\vert y\right\vert ^{2}}|u_{n}|^{2}\right) \label{p10}\\
&  \qquad\leq\int_{\mathbb{R}^{N}}\psi_{R-1}^{2}\left(  |\nabla_{\mathcal{A}%
}u_{n}|^{2}-\frac{\sigma}{\left\vert y\right\vert ^{2}}|u_{n}|^{2}\right)
+I_{R}(u_{n}).\nonumber
\end{align}
Since $u_{n}\rightarrow u$ in $L_{loc}^{2}(\mathbb{R}^{N},\mathbb{C})$,
$\lim_{R\rightarrow\infty}\lim_{n\rightarrow\infty}I_{R}(u_{n})=0$. So
(\ref{p10}) implies that%
\begin{align*}
\mu_{\infty}:=  &  \lim_{R\rightarrow\infty}\limsup_{n\rightarrow\infty}%
\int_{|x|>R}\left(  |\nabla_{\mathcal{A}}u_{n}|^{2}-\frac{\sigma}{\left\vert
y\right\vert ^{2}}|u_{n}|^{2}\right) \\
&  =\lim_{R\rightarrow\infty}\limsup_{n\rightarrow\infty}\int_{\mathbb{R}^{N}%
}\psi_{R}^{2}\left(  |\nabla_{\mathcal{A}}u_{n}|^{2}-\frac{\sigma}{\left\vert
y\right\vert ^{2}}|u_{n}|^{2}\right)  .
\end{align*}
Since the right-hand side above is well defined according to \eqref{infty},
also $\mu_{\infty}$ is well defined. Note that we have also proved that
$\mu_{\infty}$ satisfies (c) of the lemma.

Finally, similarly as in \eqref{meas}, we obtain
\begin{align*}
0  &  \leq\int_{\mathbb{R}^{N}}\left(  |\nabla_{\mathcal{A}}(\psi_{R}%
u_{n})|^{2}-\frac{\sigma}{\left\vert y\right\vert ^{2}}|\psi_{R}u_{n}%
|^{2}\right) \\
&  =\int_{\mathbb{R}^{N}}\psi_{R}^{2}\left(  |\nabla_{\mathcal{A}}u_{n}%
|^{2}-\frac{\sigma}{\left\vert y\right\vert ^{2}}|u_{n}|^{2}\right)
+2\operatorname{Re}\int_{\mathbb{R}^{N}}\psi_{R}\bar{u}_{n}\nabla
_{\mathcal{A}}u_{n}\cdot\nabla\psi_{R} +\int_{\mathbb{R}^{N}}|\nabla\psi
_{R}|^{2}|u_{n}|^{2},
\end{align*}
and by the convergence of $u_{n}$ in $L_{loc}^{2}(\mathbb{R}^{N},\mathbb{C})$,
the second and the third term on the right-hand side above tend to $0$ if one
first lets $n\rightarrow\infty$ and then $R\rightarrow\infty$. We conclude
that $\mu_{\infty}\geq0$.
\end{proof}

\section{Existence of minimizers}

\label{sec:existence}We assume that $\Omega=(\mathbb{R}^{m}\smallsetminus
\{0\})\times\mathbb{R}^{N-m}$ with $2\leq m\leq N$ and $\mathcal{A}$ is of the
form (\ref{A}). We write $(y,z)\in\mathbb{R}^{m}\times\mathbb{R}^{N-m}%
\equiv\mathbb{R}^{N}$ and consider the problem
\begin{equation}
\left\{
\begin{array}
[c]{l}%
(-i\nabla+\mathcal{A})^{2}u-\frac{\sigma}{|y|^{2}}u=|u|^{2^{\ast}-2}u,\\
u\in D_{\mathcal{A},0}^{1,2}(\Omega,\mathbb{C}).
\end{array}
\right.  \label{genprob}%
\end{equation}
Let $m\leq M\leq N$, $\ G$ be a closed subgroup of $O(m)\times O(M-m),$
considered as a subgroup of $O(M)$ in the obvious way, and $\tau
:G\rightarrow\mathbb{S}^{1}$ be a continuous homomorphism. Recall that
$G\xi:=\{g\xi:g\in G\}$ is the $G$-orbit of $\xi$ and $G_{\xi}:=\{g\in
G:g\xi=\xi\}$ its isotropy group. We write $\mathbb{R}^{N}\equiv\mathbb{R}%
^{M}\times\mathbb{R}^{N-M}$\ and assume the following:

\begin{description}
\item[$\left(  H_{1}\right)  $] $\#G\xi=\infty$ for every $\xi\in
\mathbb{R}^{M}\smallsetminus\{0\}.$

\item[$\left(  H_{2}\right)  $] There exists $\xi^{\ast}\in\mathbb{R}^{M}$
such that $G_{\xi^{\ast}}\subset\ker\tau.$

\item[$\left(  H_{3}\right)  $] $\mathcal{A}$ is $G$-equivariant.

\item[$\left(  H_{4}\right)  $] $\mathcal{A}(\xi,\zeta)=\mathcal{A}(\xi,0)$
\ for all $\xi\in(\mathbb{R}^{m}\smallsetminus\{0\})\times\mathbb{R}^{M-m},$
$\zeta\in\mathbb{R}^{N-M}.$
\end{description}

Hypothesis $(H_{2})$ guarantees that $D_{\mathcal{A},0}^{1,2}(\Omega
,\mathbb{C})^{\tau}$ is infinite dimensional, see Proposition \ref{nontrivial}%
. At the end of the section we give some examples of group actions satisfying
our assumptions.

Note that the Hardy potential $V(y,z):=\frac{\sigma}{\left\vert y\right\vert
^{2} }$ is $\left[  O(m)\times O(M-m)\right]  $-invariant and, since $m\leq
M,$ it satisfies $V(\xi,\zeta)=V(\xi,0)$ for all $\xi\in(\mathbb{R}%
^{m}\smallsetminus\{0\})\times\mathbb{R}^{M-m},$ $\zeta\in\mathbb{R}^{N-M}.$

The following result is an extension of Proposition 3.2 in \cite{cs}. The
proof is similar. We give the details for the reader's convenience.

\begin{theorem}
\label{existence}Assume that $(H_{1})$-$(H_{4})$ hold, and let $\sigma<\left(
\frac{m-2}{2}\right)  ^{2}$ or $\sigma<\kappa_{s}$ if $\mathcal{A}%
=s\mathbb{A}$. Then $S_{\mathcal{A},\sigma}^{\tau}$ is attained. Therefore
problem \emph{(\ref{genprob})} has a solution $u\in D_{\mathcal{A},0}%
^{1,2}(\Omega,\mathbb{C})$ which satisfies%
\[
u(g\xi,\zeta)=\tau(g)u(\xi,\zeta)\text{\qquad for all }g\in G.
\]
\end{theorem}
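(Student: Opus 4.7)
The plan is to apply the direct method of the calculus of variations to the constrained minimization
\[
S^{\tau}_{\mathcal{A},\sigma}=\inf\bigl\{\|u\|^{2}_{\mathcal{A},\sigma}:u\in D^{1,2}_{\mathcal{A},0}(\Omega,\mathbb{C})^{\tau},\ |u|_{2^{*}}=1\bigr\},
\]
and, once a minimizer $\bar u$ is found, to produce a solution of (\ref{genprob}) as $\lambda\bar u$ (for appropriate $\lambda>0$) via the Lagrange multiplier rule and the principle of symmetric criticality. Hypothesis $(H_{2})$ together with Proposition \ref{nontrivial} ensures $D^{1,2}_{\mathcal{A},0}(\Omega,\mathbb{C})^{\tau}\neq\{0\}$, and the magnetic Hardy inequality (or its Aharonov--Bohm version when $\mathcal{A}=s\mathbb{A}$) makes $\|\cdot\|_{\mathcal{A},\sigma}$ an equivalent norm, so any minimizing sequence $(u_{n})$ with $|u_{n}|_{2^{*}}=1$ is bounded.

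Before extracting a weak limit I would exploit the symmetries to normalize. By $(H_{4})$ translations in $\zeta\in\mathbb{R}^{N-M}$ preserve both the norm and $\tau$-equivariance, and the critical dilation $u\mapsto t^{(N-2)/2}u(t\,\cdot\,)$ is an isometry between $D^{1,2}_{\mathcal{A},0}$ and $D^{1,2}_{\mathcal{A}_{t},0}$ preserving $\tau$-equivariance (so $S^{\tau}_{\mathcal{A},\sigma}$ is unchanged). Using the Lévy concentration function I would rescale and $\zeta$-translate $u_{n}$ so that
\[
\sup_{x\in\mathbb{R}^{N}}\int_{B(x,1)}|u_{n}|^{2^{*}}=\tfrac{1}{2},
\]
with a ball of mass $\geq\tfrac14$ centered at some $x_{n}=(\xi_{n},0)$. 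Passing to a subsequence, $u_{n}\rightharpoonup u$ weakly in $D^{1,2}_{\mathcal{A},0}(\Omega,\mathbb{C})^{\tau}$ and $u_{n}\to u$ a.e., and Lemma \ref{cc} supplies measures $\mu,\nu$ and numbers $\mu_{\infty},\nu_{\infty}\geq 0$.

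The decisive step is to rule out $u=0$. Assuming it, parts (a)--(d) of Lemma \ref{cc} yield
\[
S^{\tau}_{\mathcal{A},\sigma}\bigl(\|\nu\|^{2/2^{*}}+\nu_{\infty}^{2/2^{*}}\bigr)\leq\|\mu\|+\mu_{\infty}=S^{\tau}_{\mathcal{A},\sigma},\qquad\|\nu\|+\nu_{\infty}=1,
\]
and since $a^{2/2^{*}}+b^{2/2^{*}}\geq a+b$ for $a,b\in[0,1]$ with equality only when $ab=0$, one has $\|\nu\|\cdot\nu_{\infty}=0$ and equality in (a) or (b). If $\|\nu\|=1$, part (e) forces $\nu$ to be concentrated on a single finite $G$-orbit of $\mathbb{R}^{N}$; by $(H_{1})$ and the triviality of the $G$-action on $\mathbb{R}^{N-M}$ every such orbit is a singleton $\{(0,\zeta^{*})\}\subset\partial\Omega$, so $\nu=\delta_{(0,\zeta^{*})}$, which contradicts the Lévy normalization $\sup_{x}\int_{B(x,1)}|u_{n}|^{2^{*}}=\tfrac12$. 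If $\nu_{\infty}=1$, the Lévy centers $x_{n}=(\xi_{n},0)$ must satisfy $|\xi_{n}|\to\infty$; then by $(H_{1})$ the orbit $G\xi_{n}$ is a positive-dimensional compact submanifold of the sphere of radius $|\xi_{n}|$, so for any $K$ one finds $K$ elements $g_{k}\xi_{n}$ at mutual distance $>2$ when $n$ is large, whose disjoint unit balls each carry $L^{2^{*}}$-mass $\geq\tfrac14$ by the $\tau$-equivariance of $|u_{n}|^{2^{*}}$, violating $|u_{n}|_{2^{*}}=1$ for $K>4$.

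Hence $u\neq 0$; applying the same concavity inequality to (c),(d) of Lemma \ref{cc} together with $\|u\|^{2}_{\mathcal{A},\sigma}\geq S^{\tau}_{\mathcal{A},\sigma}|u|^{2}_{2^{*}}$ forces $\|\nu\|=\nu_{\infty}=0$ and $|u|_{2^{*}}=1$, so $u$ attains $S^{\tau}_{\mathcal{A},\sigma}$. The main obstacle is precisely this dichotomy step: converting the abstract concentration-compactness alternative into a quantitative contradiction by combining the Lévy normalization with the infinite $G$-orbit structure coming from $(H_{1})$, and checking that the dilation and $\zeta$-translation normalizations are genuinely compatible with the $\tau$-equivariant subspace.
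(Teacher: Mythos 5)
Your proposal is correct and follows the paper's overall strategy: minimize the quotient on $D_{\mathcal{A},0}^{1,2}(\Omega,\mathbb{C})^{\tau}$, normalize a minimizing sequence by a dilation and a translation in the $\zeta$-variables, apply Lemma \ref{cc}, exclude concentration and escape to infinity, and finish with the Lagrange multiplier rule and symmetric criticality. The genuine difference is the normalization. The paper uses the concentration function $Q_k(r)=\sup_{\zeta\in\mathbb{R}^{N-M}}\int_{B_r(0,\zeta)}|u_k|^{2^{\ast}}$ with centers restricted to the axis $\{0\}\times\mathbb{R}^{N-M}$ and fixes its value at $\tfrac12$; after rescaling, half of the mass lies in the fixed ball $B_1(0,0)$, so $\nu_{\infty}=1$ is impossible immediately, and $\|\nu\|=1$ is excluded exactly as you do, via part (e) of Lemma \ref{cc}, $(H_1)$ and the value $\tfrac12$. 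Your unrestricted L\'evy normalization only yields a ball of mass $\geq\tfrac14$ centered at some $(\xi_n,0)$ with $\xi_n$ possibly escaping, so you must rule out $\nu_{\infty}=1$ by hand; your argument (many disjoint unit balls along the orbit $G\xi_n$, each carrying mass $\geq\tfrac14$ by $G$-invariance of $|u_n|$) is sound, but its key claim is only asserted: that for every $K$ the orbit $G\xi_n$ contains $K$ points at mutual distance $>2$ once $|\xi_n|$ is large. This is true but needs the uniformity argument: by linearity the pairwise separations scale like $|\xi_n|$, and the function $\omega\mapsto\max\bigl\{\min_{i\neq j}|g_i\omega-g_j\omega|:g_1,\dots,g_K\in G\bigr\}$ is continuous and, by $(H_1)$, positive on the unit sphere of $\mathbb{R}^{M}$, hence bounded below by compactness — essentially the argument that underlies Lemma \ref{cc}(e), so your route re-proves a piece of machinery the paper sidesteps by its choice of centers. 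One further point to make explicit: you present the dilation as an isometry onto $D_{\mathcal{A}_t,0}^{1,2}$, but Lemma \ref{cc} concerns a fixed potential; as in the paper, one should use the structure (\ref{A}) together with $(H_4)$ to check that the rescaled and $\zeta$-translated sequence remains in the same space $D_{\mathcal{A},0}^{1,2}(\Omega,\mathbb{C})^{\tau}$ with unchanged $\|\cdot\|_{\mathcal{A},\sigma}$-norm, rather than in a family of spaces with varying potentials.
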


\begin{proof}
Let $(u_{k})$ be a sequence in $D_{\mathcal{A},0}^{1,2}(\Omega,\mathbb{C}%
)^{\tau}$ such that $\left\vert u_{k}\right\vert _{2^{\ast}}=1$ and
$\left\Vert u_{k}\right\Vert _{\mathcal{A},\sigma}^{2}\rightarrow
S_{\mathcal{A},\sigma}^{\tau}.$ For each $k\in\mathbb{N}$ set%
\[
Q_{k}(r):=\sup_{\zeta\in\mathbb{R}^{N-M}}\int_{B_{r}(0,\zeta)}\left\vert
u_{k}\right\vert ^{2^{\ast}}.
\]
Then $Q_{k}(r)\rightarrow0$ as $r\rightarrow0$ and $Q_{k}(r)\rightarrow1$ as
$r\rightarrow\infty$. Hence, there exist $r_{k}\in(0,\infty)$ and $\zeta
_{k}\in\mathbb{R}^{N-M}$ such that
\[
Q_{k}(r_{k}) := \sup_{\zeta\in\mathbb{R}^{N-M}}\int_{B_{r_{k}}(0,\zeta
)}\left\vert u_{k}\right\vert ^{2^{\ast}}=\int_{B_{r_{k}}(0,\zeta_{k}%
)}\left\vert u_{k}\right\vert ^{2^{\ast}}=\frac{1}{2}.
\]
Let $w_{k}(\xi,\zeta):=r_{k}^{(N-2)/2}u_{k}(r_{k}\xi,r_{k}\zeta+\zeta_{k}).$
Then $w_{k}\in D_{\mathcal{A},0}^{1,2}(\Omega,\mathbb{C})^{\tau}$, $\left\vert
w_{k}\right\vert _{2^{\ast}}=1$, $\left\Vert w_{k}\right\Vert _{\mathcal{A}%
,\sigma}^{2}=\left\Vert u_{k}\right\Vert _{\mathcal{A},\sigma}^{2}$ and%
\begin{equation}
\sup_{\zeta\in\mathbb{R}^{N-M}}\int_{B_{1}(0,\zeta)}\left\vert w_{k}%
\right\vert ^{2^{\ast}}=\int_{B_{1}(0,0)}\left\vert w_{k}\right\vert
^{2^{\ast}}=\frac{1}{2}. \label{nonvanish}%
\end{equation}
Passing to a subsequence, we have that $w_{k}\rightharpoonup w$ weakly in
$D_{\mathcal{A},0}^{1,2}(\Omega,\mathbb{C})^{\tau},$ $w_{k}(x)\rightarrow
w(x)\ $a.e. in $\mathbb{R}^{N},\ \ |\nabla_{\mathcal{A}}(w_{k}-w)|^{2}%
-\frac{\sigma}{\left\vert y\right\vert ^{2}}|w_{k}-w|^{2}\rightharpoonup\mu$
\ and $|w_{k}-w|^{2^{\ast}}\rightharpoonup\nu$ weakly in $\mathcal{M}%
(\mathbb{R}^{N}).$ Since%
\[
\lim_{k\rightarrow\infty}\Vert w_{k}\Vert_{\mathcal{A},\sigma}^{2}%
=S_{\mathcal{A},\sigma}^{\tau}\lim_{k\rightarrow\infty}|w_{k}|_{2^{\ast}}%
^{2},
\]
using Lemma \ref{cc} and the definition of $S_{\mathcal{A},\sigma}^{\tau}$ we
obtain%
\begin{align*}
\Vert w\Vert_{\mathcal{A},\sigma}^{2}+\left\Vert \mu\right\Vert +\mu_{\infty}
&  =S_{\mathcal{A},\sigma}^{\tau}\left(  |w|_{2^{\ast}}^{2^{\ast}}+\left\Vert
\nu\right\Vert +\nu_{\infty}\right)  ^{2/2^{\ast}}\\
&  \leq S_{\mathcal{A},\sigma}^{\tau}\left(  |w|_{2^{\ast}}^{2}+\left\Vert
\nu\right\Vert ^{2/2^{\ast}}+\nu_{\infty}^{2/2^{\ast}}\right)  \leq\Vert
w\Vert_{\mathcal{A},\sigma}^{2}+\left\Vert \mu\right\Vert +\mu_{\infty}.
\end{align*}
Hence%
\[
\left(  |w|_{2^{\ast}}^{2^{\ast}}+\left\Vert \nu\right\Vert +\nu_{\infty
}\right)  ^{2/2^{\ast}}=|w|_{2^{\ast}}^{2}+\left\Vert \nu\right\Vert
^{2/2^{\ast}}+\nu_{\infty}^{2/2^{\ast}}.
\]
It follows that one of the quantities $\left\vert w\right\vert _{2^{\ast}%
},\left\Vert \nu\right\Vert ,\nu_{\infty}$ is $1$ and the other two are $0.$
Equality (\ref{nonvanish}) implies that $\nu_{\infty}\neq1,$ hence
$\nu_{\infty}=0.$ Assume $\left\Vert \nu\right\Vert =1.$ Then $w=0$ and
$S_{\mathcal{A},\sigma}^{\tau}\left\Vert \nu\right\Vert ^{2/2^{\ast}}=\Vert
\mu\Vert,$ so $\nu$ is concentrated at the $G$-orbit of a point $(0,\zeta
^{\ast})$ with $\zeta^{\ast}\in\mathbb{R}^{N-M}$ (all other $G$-orbits are
infinite because of $(H_{1})$). But since $\#G(0,\zeta^{\ast})=1$,
(\ref{nonvanish}) yields%
\[
\frac{1}{2}=\int_{B_{1}(0,0)}\left\vert w_{k}\right\vert ^{2^{\ast}}\geq
\lim_{k\rightarrow\infty}\int_{B_{1}(0,\zeta^{\ast})}\left\vert w_{k}%
\right\vert ^{2^{\ast}}=\left\Vert \nu\right\Vert =1.
\]
This is a contradiction. Therefore $\left\Vert \nu\right\Vert =0,$ $\left\vert
w\right\vert _{2^{\ast}}=1$ and $\Vert w\Vert_{\mathcal{A},\sigma}%
^{2}=S_{\mathcal{A},\sigma}^{\tau}$.
\end{proof}

Theorem \ref{thmpoint}\ is just a special case of the previous result with
$m=M=N.$ In this case, assumption $(H_{4})$ is trivially satisfied. Assumption
$(H_{1})$ holds for the actions of $G=\mathbb{S}^{1}$ or $G=\mathbb{S}%
^{1}\times\cdots\times\mathbb{S}^{1}$\ described in Example \ref{expt} if, for
example, $k=0$ or $k\geq2$ and $\Gamma=O(k)$ (but not if $k=1$!)$.$ The
choices of $\tau$ described there will give solutions with the corresponding
symmetry properties (note that $(H_{2})$ holds).

For the Aharonov-Bohm potential $s\mathbb{A},$ taking $m=2$, $M=N,$
$G=\mathbb{S}^{1}\times O(N-2)$ and $\tau_{n}(e^{i\vartheta},g)=e^{in\vartheta
}$ in Theorem \ref{existence}\ we obtain a solution to problem (\ref{ab})
satisfying (c) and (d) of Theorem \ref{thmunbdd}\ when $N\geq4.$ If $N=3$ this
$G$ does not satisfy $(H_{1}).$ However, since $s\mathbb{A}$ does not depend
on the last variable, $(H_{1})$-$(H_{4})$ hold for $m=M=2$ and $G=\mathbb{S}%
^{1},$ so we may apply Theorem \ref{existence} with $\tau_{n}(e^{i\vartheta
})=e^{in\vartheta}$ to obtain a solution which satisfies (c). In order to get
the other statements of Theorem \ref{thmunbdd} we shall proceed in a different way.

\section{The Aharonov-Bohm potential}

\label{ahar}Set $\Omega:=(\mathbb{R}^{2}\smallsetminus\{0\})\times
\mathbb{R}^{N-2}.$ Although the curl of the Aharonov-Bohm potential (\ref{AB})
is zero, $\mathbb{A}$ is not the gradient of a function defined on $\Omega.$
However, it is the gradient of a function which is defined locally. Namely, if
$\theta(y,z)$ denotes the polar angle of $y\in\mathbb{C}\equiv\mathbb{R}^{2}$,
then $\nabla\theta=\mathbb{A}.$ The function $e^{i\theta}$ is well defined
(while $\theta$ is only defined up to an integer multiple of $2\pi$) and a
direct computation shows that if $v_{s+n}$ is a solution of $(\wp{_{s+n}})$,
then $e^{in\theta}v_{s+n}$ is a solution of (\ref{ab}). As we did in
\cite{cs}, we use this fact to construct $u_{n}$ by taking $v_{s+n}$ to be a
real-valued $\mathbb{S}^{1}$-invariant solution of $(\wp{_{s+n}})$ and setting
$u_{n}:=e^{in\theta}v_{s+n}$. These solutions will have the properties
asserted by Theorem \ref{thmunbdd}. Since the proof of Theorem \ref{thmunbdd}
is similar to that of Theorem 1.3 in \cite{cs}, we omit some details and
concentrate on pointing out the differences.

If $\varphi\in C_{c}^{\infty}(\Omega,\mathbb{C})$ is $\mathbb{S}^{1}%
$-invariant, i.e. constant on every circle $\{(y,z_{0}):\left\vert
y\right\vert =r\},$ $r>0,$ $z_{0}\in\mathbb{R}^{N-2}$ then, since
$\mathbb{A}(y,z_{0})$ is tangent to the circle, we have that $\nabla
\varphi(x)\cdot\mathbb{A}(x)=0$ for all $x\in\Omega$. Hence,
\begin{equation}
|\nabla\varphi+is\mathbb{A}\varphi|^{2}=|\nabla\varphi|^{2}+|s\mathbb{A}%
\varphi|^{2} \label{gradradial}%
\end{equation}
and
\begin{align}
\Vert u\Vert_{s\mathbb{A},\sigma}^{2}  &  =\int_{\Omega}\left(  |\nabla
u|^{2}+|s\mathbb{A}|^{2}|u|^{2}-\frac{\sigma}{\left\vert y\right\vert ^{2}%
}|u|^{2}\right) \label{ab2}\\
&  =\int_{\Omega}\left(  |\nabla u|^{2}+\frac{s^{2}-\sigma}{\left\vert
y\right\vert ^{2}}|u|^{2}\right)  \quad\text{\ for all }u\in D_{s\mathbb{A}%
,0}^{1,2}(\Omega,\mathbb{C})^{\mathbb{S}^{1}}.\nonumber
\end{align}
In this case, the minimizer can be chosen to be real-valued. More precisely,
the following holds.

\begin{lemma}
\label{AB1}Let $\sigma<\kappa_{s}$ and $G=\mathbb{S}^{1}\times\Gamma,$ where
$\Gamma$ is a subgroup of $O(N-2)$. Then
\begin{equation}
S_{s\mathbb{A},\sigma}^{G}:=\inf_{\substack{u\in D_{s\mathbb{A},0}%
^{1,2}(\Omega,\mathbb{C})^{G}\\u\neq0}}\frac{\Vert u\Vert_{s\mathbb{A},\sigma
}^{2}}{|u|_{2^{\ast}}^{2}}=\inf_{\substack{u\in D_{s\mathbb{A},0}^{1,2}%
(\Omega,\mathbb{R})^{G}\\u\neq0}}\frac{\Vert u\Vert_{s\mathbb{A},\sigma}^{2}%
}{|u|_{2^{\ast}}^{2}}. \label{SG}%
\end{equation}
Moreover, if $u$ is a minimizer for the left-hand side, then $|u|$ is a
minimizer for the right-hand side.
\end{lemma}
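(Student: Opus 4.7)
The plan is to prove the nontrivial inequality $\le$ in \eqref{SG} (the reverse is immediate from the inclusion $D_{s\mathbb{A},0}^{1,2}(\Omega,\mathbb{R})^G \subset D_{s\mathbb{A},0}^{1,2}(\Omega,\mathbb{C})^G$) by showing that for every nonzero $u\in D_{s\mathbb{A},0}^{1,2}(\Omega,\mathbb{C})^G$ the real function $|u|$ is an admissible competitor for the right-hand side whose Rayleigh quotient does not exceed that of $u$. The ``moreover'' statement will then follow at once from this pointwise comparison of Rayleigh quotients.

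First I would verify the $G$-invariance and membership of $|u|$. Since $G=\mathbb{S}^{1}\times\Gamma$ acts separately on the $y$- and $z$-variables, the $\mathbb{S}^{1}$-invariance of $u$ gives $|u(e^{i\vartheta}y,z)|=|u(y,z)|$ and the $\Gamma$-invariance of $u$ gives $|u(y,hz)|=|u(y,z)|$, so $|u|$ is $G$-invariant. The diamagnetic inequality \eqref{di} combined with \eqref{abhardy} (valid because $\sigma<\kappa_s\le s^{2}$, hence $s^{2}-\sigma>0$) ensures $|u|\in D_{0}^{1,2}(\Omega,\mathbb{R})$ with $|y|^{-1}|u|\in L^{2}(\Omega)$; a standard truncation and mollification of a $C_{c}^{\infty}(\Omega,\mathbb{C})^{G}$-approximation of $u$ then places $|u|$ in $D_{s\mathbb{A},0}^{1,2}(\Omega,\mathbb{R})^{G}$.

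The heart of the argument is a single pointwise estimate. Since both $u$ and $|u|$ are $\mathbb{S}^{1}$-invariant, identity \eqref{ab2} applies to each and yields
\[
\|u\|_{s\mathbb{A},\sigma}^{2}=\int_{\Omega}\left(|\nabla u|^{2}+\frac{s^{2}-\sigma}{|y|^{2}}|u|^{2}\right),\qquad \bigl\||u|\bigr\|_{s\mathbb{A},\sigma}^{2}=\int_{\Omega}\left(\bigl|\nabla|u|\bigr|^{2}+\frac{s^{2}-\sigma}{|y|^{2}}|u|^{2}\right).
\]
The Kato inequality $\bigl|\nabla|u|\bigr|\le|\nabla u|$ a.e.\ then gives $\bigl\||u|\bigr\|_{s\mathbb{A},\sigma}\le\|u\|_{s\mathbb{A},\sigma}$. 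Combined with the trivial equality $\bigl||u|\bigr|_{2^{*}}=|u|_{2^{*}}$, this produces
\[
\frac{\bigl\||u|\bigr\|_{s\mathbb{A},\sigma}^{2}}{\bigl||u|\bigr|_{2^{*}}^{2}}\le\frac{\|u\|_{s\mathbb{A},\sigma}^{2}}{|u|_{2^{*}}^{2}},
\]
from which \eqref{SG} and the ``moreover'' assertion follow immediately by taking the infimum over $u$ on the right and noting that a complex minimizer produces a real competitor with Rayleigh quotient $\le S^{G}_{s\mathbb{A},\sigma}$.

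The only delicate point is the space-level bookkeeping: checking that $|u|$ actually lies in the closure $D_{s\mathbb{A},0}^{1,2}(\Omega,\mathbb{R})^{G}$ rather than merely having a finite $\|\cdot\|_{s\mathbb{A},\sigma}$-norm. This is routine---approximate $u$ by $C_{c}^{\infty}(\Omega,\mathbb{C})^{G}$ functions, take absolute values, mollify, and use continuity of the map $u\mapsto|u|$ in the relevant norm---but it is the one step that must be spelled out with care, since the definition of the Sobolev space is by closure of smooth compactly supported functions and not all norm-finite functions a priori lie in it.
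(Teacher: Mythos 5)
Your argument is correct and follows essentially the same route as the paper: one direction is immediate from the inclusion $D_{s\mathbb{A},0}^{1,2}(\Omega,\mathbb{R})^{G}\subset D_{s\mathbb{A},0}^{1,2}(\Omega,\mathbb{C})^{G}$, and the other comes from combining the identity \eqref{ab2} for $\mathbb{S}^{1}$-invariant functions with the diamagnetic (Kato) inequality $|\nabla|u||\le|\nabla u|$ to compare Rayleigh quotients of $u$ and $|u|$, which also yields the ``moreover'' claim. Your extra remark on verifying that $|u|$ lies in the closure $D_{s\mathbb{A},0}^{1,2}(\Omega,\mathbb{R})^{G}$ is a point the paper passes over silently, and your sketch of how to settle it is adequate.
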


\begin{proof}
Note that $D_{s\mathbb{A},0}^{1,2}(\Omega,\mathbb{C})^{G}\subset
D_{s\mathbb{A},0}^{1,2}(\Omega,\mathbb{C})^{\mathbb{S}^{1}}$. Applying the
diamagnetic inequality and (\ref{ab2}) we obtain%
\begin{align*}
\Vert\left\vert u\right\vert \Vert_{s\mathbb{A},\sigma}^{2}  &  =\int_{\Omega
}\left(  |\nabla\left\vert u\right\vert |^{2}+\frac{s^{2}-\sigma}{\left\vert
y\right\vert ^{2}}|u|^{2}\right) \\
&  \leq\int_{\Omega}\left(  |\nabla u|^{2}+\frac{s^{2}-\sigma}{\left\vert
y\right\vert ^{2}}|u|^{2}\right)  =\Vert u\Vert_{s\mathbb{A},\sigma}^{2}%
\end{align*}
for all $u\in D_{s\mathbb{A},0}^{1,2}(\Omega,\mathbb{C})^{G}$ such that
$|u|_{2^{\ast}}=1$. This proves that the right-hand side of (\ref{SG}) is
smaller than or equal to the left-hand side. Since $D_{s\mathbb{A},0}%
^{1,2}(\Omega,\mathbb{R})^{G}\subset D_{s\mathbb{A},0}^{1,2}(\Omega
,\mathbb{C})^{G}$ the opposite inequality is obvious. This proves equality
(\ref{SG}). 

If $u$ is a minimizer for the left-hand side of
\eqref{SG}, then the diamagnetic inequality and equality (\ref{SG}) imply that
$|u|$ is a minimizer for the right-hand side.
\end{proof}

\begin{proof}
[Proof of Theorem \ref{thmunbdd}]If $N\geq4$ we take $M=N$ and $G=\mathbb{S}%
^{1}\times O(N-2),$ and if $N=3$ we take $M=2$ and $G=\mathbb{S}^{1}.$ In both
cases we take $\tau\equiv1.$ Then Theorem \ref{existence}, together with Lemma
\ref{AB1}, gives a real-valued nonnegative minimizer $v_{s}$ for (\ref{SG})
such that $|v_{s}|_{2^{\ast}}=(S_{s\mathbb{A},\sigma}^{G})^{1/(2^{\ast}-2)}$.
It follows from the principle of symmetric criticality \cite{p, wi} that
$v_{s}$ satisfies
\[
-\Delta u+\frac{s^{2}-\sigma}{\left\vert y\right\vert ^{2}}\,u=u^{2^{\ast}%
-1}\quad\text{in }\Omega.
\]
Since $\sigma<\kappa_{s}=:\min_{k\in\mathbb{Z}}|k+s|^{2}\leq s^{2},$ standard
regularity results and the maximum principle imply that $v_{s}$ is continuous
and $v_{s}>0$ in $\Omega$. If $N=3$ we apply the moving plane method
\cite{gnn,wi} to show that $u_{n}(y,-z)=u_{n}(y,z)$ after a translation along
the $z$-axis. Therefore $v_{s}$ is $\left[  \mathbb{S}^{1}\times
O(N-2)\right]  $-invariant for every $N\geq3.$ Set
\[
u_{n}:=e^{in\theta}v_{s+n},\quad n\in\mathbb{Z},
\]
where as before, $\theta(y,z)$ is the polar angle of $y$. A simple computation
shows that
\[
\nabla_{s\mathbb{A}}u_{n}=e^{in\theta}\nabla_{(s+n)\mathbb{A}}v_{s+n}.
\]
From this, it is easy to see that $u_{n}$ is a solution to $(\wp_{s}),$ and
the statements (a), (c) and (d) hold. If $v_{t}=v_{s}=v$, then
\[
-\Delta v+\frac{s^{2}-\sigma}{\left\vert y\right\vert ^{2}}\,v=v^{2^{\ast}%
-1}=-\Delta v+\frac{t^{2}-\sigma}{\left\vert y\right\vert ^{2}}\,v,
\]
hence $s^{2}=t^{2}$. So $v_{s}\neq v_{t}$ whenever $s^{2}\neq t^{2}$ which
implies (b). Finally, if $t^{2}>s^{2}$, then
\[
\Vert v_{t}\Vert_{t\mathbb{A},\sigma}^{2}=\Vert v_{t}\Vert_{s\mathbb{A}%
,\sigma}^{2}+(t^{2}-s^{2})\int_{\Omega}\frac{v_{t}^{2}}{\left\vert
y\right\vert ^{2}}>\Vert v_{t}\Vert_{s\mathbb{A},\sigma}^{2}%
\]
and therefore $S_{t\mathbb{A},\sigma}^{G}>S_{s\mathbb{A},\sigma}^{G}$. Since
$\Vert u_{n}\Vert_{s\mathbb{A},\sigma}=\Vert v_{(s+n)}\Vert_{(s+n)\mathbb{A}%
,\sigma}$, it follows that (e) is satisfied.
\end{proof}

\begin{remark}
\emph{ Theorem \ref{thmunbdd} holds for $2<p<2^{\ast}$ if one replaces
$(-i\nabla+s\mathbb{A})^{2}u$ by $(-i\nabla+s\mathbb{A})^{2}u+u$. Also
Theorems \ref{thmpoint} and \ref{existence}, with $(-i\nabla+\mathcal{A}%
)^{2}u$ replaced by $(-i\nabla+\mathcal{A})^{2}u+u$, hold for such $p$. The
proof that the infimum for an appropriate quotient is attained is similar to
that of Proposition 2.3 in \cite{cs}. }
\end{remark}

\section{Equivariant potentials}

\label{sec:rad}Before proving Theorem \ref{rad} we show how symmetry
properties of $\mathcal{A}$ and curl$\mathcal{A}$ are related.

\begin{proposition}
\label{Sing1} Let $G$ be a closed subgroup of $SO(N)$, $\Omega$ a
$G$-invariant domain in $\mathbb{R}^{N}$ and $\mathcal{A}\in L_{loc}%
^{2}(\Omega,\mathbb{R}^{N})$, $N\geq2$.

\begin{enumerate}
\item[(i)] If $\mathcal{A}(gx)=g\mathcal{A}(x)$ for $g\in G,$ $x\in\Omega,$
then $\mathcal{B}(gx)=g\mathcal{B}(x)g^{-1}$.

\item[(ii)] If $\mathcal{B}:=$ \emph{curl}$\mathcal{A}$ (in the sense of
distributions) for some $\mathcal{A}\in L_{loc}^{2}(\Omega,\mathbb{R}^{N})$
and $\mathcal{B}(gx)=g\mathcal{B}(x)g^{-1}$ for all $g\in G,$ $x\in\Omega,$
then there exists $\widehat{\mathcal{A}}\in L_{loc}^{2}(\Omega,\mathbb{R}%
^{N})$ such that \emph{curl}$\widehat{\mathcal{A}}=\mathcal{B}(x)$ and
$\widehat{\mathcal{A}}(gx)=g\widehat{\mathcal{A}}(x).$
\end{enumerate}
\end{proposition}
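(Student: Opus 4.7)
My plan is to handle the two parts separately, with part (i) being a direct computation and part (ii) being a Haar-averaging construction.

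For part (i), the cleanest approach is to reinterpret the hypothesis in terms of differential forms. Viewing $\mathcal{A}$ as the $1$-form $\alpha=\sum_i\mathcal{A}_i\,dx_i$, the equivariance $\mathcal{A}(gx)=g\mathcal{A}(x)$ together with $g\in O(N)$ (preserving the Euclidean inner product) yields $g^{\ast}\alpha=\alpha$. Naturality of the exterior derivative then gives $g^{\ast}d\alpha=d\alpha$, and unpacking what $g^{\ast}$ does to the $2$-form $\mathcal{B}=d\alpha=\sum_{j<k}\mathcal{B}_{jk}\,dx_j\wedge dx_k$ (using that $d(gx)_j=\sum_p g_{jp}\,dx_p$) translates, after using the antisymmetry of $\mathcal{B}$ and $g^T=g^{-1}$, into the matrix identity $\mathcal{B}(gx)=g\mathcal{B}(x)g^{-1}$. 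If I prefer to avoid forms, the same identity drops out of differentiating the component equation $\mathcal{A}_k(gx)=\sum_i g_{ki}\mathcal{A}_i(x)$ with the chain rule and antisymmetrizing in $(j,k)$. Since $\mathcal{A}$ is only $L^2_{loc}$, everything should be read distributionally, which is legitimate because the coefficients $g_{ij}$ are constants.

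For part (ii), the plan is to produce $\widehat{\mathcal{A}}$ by symmetrization. Since $G\subset SO(N)$ is a closed subgroup of a compact group, it carries a normalized (bi-invariant) Haar measure $d\mu(g)$. I set
\[
\widehat{\mathcal{A}}(x):=\int_G g^{-1}\mathcal{A}(gx)\,d\mu(g).
\]
Minkowski's integral inequality applied to the finite-measure integral shows $\widehat{\mathcal{A}}\in L^2_{loc}(\Omega,\mathbb{R}^N)$. Equivariance of $\widehat{\mathcal{A}}$ follows from a change of variable $g\mapsto gh$ using left-invariance of $\mu$: for any $h\in G$,
\[
\widehat{\mathcal{A}}(hx)=\int_G g^{-1}\mathcal{A}(ghx)\,d\mu(g)=h\int_G (gh)^{-1}\mathcal{A}((gh)x)\,d\mu(g)=h\widehat{\mathcal{A}}(x).
\]
To compute the curl I first compute $\mathrm{curl}\,\mathcal{A}_g$ where $\mathcal{A}_g(x):=g^{-1}\mathcal{A}(gx)$. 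A direct index calculation (analogous to part (i)) shows $\mathrm{curl}\,\mathcal{A}_g(x)=g^{-1}(\mathrm{curl}\,\mathcal{A})(gx)g=g^{-1}\mathcal{B}(gx)g$, and the hypothesis $\mathcal{B}(gx)=g\mathcal{B}(x)g^{-1}$ collapses this to $\mathcal{B}(x)$ independently of $g$. Integrating over $G$ with total mass $1$ gives $\mathrm{curl}\,\widehat{\mathcal{A}}=\mathcal{B}$.

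The main technical obstacle is the interchange of $\mathrm{curl}$ with the Haar integral in the distributional setting. I would handle it by testing against an arbitrary $\varphi\in C_c^{\infty}(\Omega,\mathbb{R}^N)$: by Fubini (justified because $|\mathcal{A}(gx)\nabla\varphi(x)|$ is integrable on the compact set $G\times\mathrm{supp}\,\varphi$), the distributional pairing $\langle\mathrm{curl}\,\widehat{\mathcal{A}},\varphi\rangle$ equals $\int_G\langle\mathrm{curl}\,\mathcal{A}_g,\varphi\rangle\,d\mu(g)$, and the previous pointwise identity then identifies it with $\langle\mathcal{B},\varphi\rangle$. A secondary subtlety is checking that the change of variable $x\mapsto gx$ in the distributional definition of $\mathrm{curl}\,\mathcal{A}_g$ is legitimate, which is immediate because $g$ is a smooth diffeomorphism of $\Omega$ (preserving $L^2_{loc}$ and distributional derivatives with the chain rule giving the matrix $g^{-1}\cdots g$ conjugation). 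With those two interchange points verified, both conclusions follow.
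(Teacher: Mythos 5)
Your proposal is correct and follows essentially the same route as the paper: part (i) is the same chain-rule/pullback computation showing $(D\mathcal{A})(gx)=g(D\mathcal{A})(x)g^{-1}$ and antisymmetrizing, and part (ii) is the identical Haar-averaging construction $\widehat{\mathcal{A}}(x)=\int_G g^{-1}\mathcal{A}(gx)\,dg$ with equivariance by translation invariance of the Haar measure and curl computed under the integral. Your extra care with the distributional interpretation (Fubini against test functions, the $L^2_{loc}$ bound, and bi-invariance of the Haar measure, which is what the substitution $g\mapsto gh$ actually uses) only fills in details the paper leaves implicit.
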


\begin{proof}
(i) If $\mathcal{A}(gx)=g\mathcal{A}(x)$ then, taking derivatives, we obtain
$(D\mathcal{A})(gx)=g(D\mathcal{A})(x)g^{-1}.$ Hence $(D\mathcal{A}%
)^{t}(gx)=g(D\mathcal{A})^{t}(x)g^{-1}$ and
\[
\mathcal{B}(gx)=(D\mathcal{A})^{t}(gx)-(D\mathcal{A})(gx)=g((D\mathcal{A}%
)^{t}(x)-(D\mathcal{A})(x))g^{-1}=g\mathcal{B}(x)g^{-1}.
\]
(ii) If $\mathcal{B}(gx)=g\mathcal{B}(x)g^{-1}$ for all $g\in G,$ $x\in
\Omega,$ define%
\[
\widehat{\mathcal{A}}(x):=\frac{1}{\left\vert G\right\vert }\int_{G}%
g^{-1}\mathcal{A}(gx)\,dg,
\]
where $dg$ is the Haar measure and $\left\vert G\right\vert :=\int_{G}dg$.
Then, for every $g_{0}\in G,$ $x\in\Omega,$ we have%
\begin{align*}
\widehat{\mathcal{A}}(g_{0}x)  &  =\frac{1}{\left\vert G\right\vert }\int
_{G}g_{0}g_{0}^{-1}g^{-1}A(gg_{0}x)\,dg\\
&  =g_{0}\left(  \frac{1}{\left\vert G\right\vert }\int_{G}g^{-1}%
\mathcal{A}(gx)\,dg\right)  =g_{0}\widehat{\mathcal{A}}(x).
\end{align*}
Moreover, since%
\[
(D\widehat{\mathcal{A}})(x)=\frac{1}{\left\vert G\right\vert }\int_{G}%
g^{-1}(D\mathcal{A})(gx)g\,dg\quad\text{and}\quad(D\widehat{\mathcal{A}}%
)^{t}(x)=\frac{1}{\left\vert G\right\vert }\int_{G}g^{-1}(DA)^{t}(gx)g\,dg,
\]
we have that%
\[
\text{curl}\widehat{\mathcal{A}}(x)=\frac{1}{\left\vert G\right\vert }\int
_{G}g^{-1}\mathcal{B}(gx)g\,dg=\frac{1}{\left\vert G\right\vert }\int
_{G}\mathcal{B}(x)\,dg=\mathcal{B}(x),
\]
as claimed.
\end{proof}

\begin{proof}
[Proof of Theorem \ref{rad}](i) Write $\mathcal{A}(x)=\mathcal{A}_{\tau
}(x)+\nu(x)x$ with $\mathcal{A}_{\tau}(x)\cdot x=0,$ $\nu(x)\in\mathbb{R}.$
Assume that $\mathcal{A}_{\tau}(x)\neq0$ for some $x\in\Omega,$ $x\neq0.$
Since $N\geq3,$ we may choose $\gamma\in SO(N)$ such that $\gamma x=x$ and
$\gamma\mathcal{A}_{\tau}(x)\neq\mathcal{A}_{\tau}(x)$ (just take $\gamma$ to
be an appropriate rotation around the line generated by $x$). Then,%
\[
\gamma\mathcal{A}_{\tau}(x)+\nu(x)x=\gamma\mathcal{A}(x)=\mathcal{A}(\gamma
x)=\mathcal{A}(x)=\mathcal{A}_{\tau}(x)+\nu(x)x
\]
and, therefore, $\gamma\mathcal{A}_{\tau}(x)=\mathcal{A}_{\tau}(x).$ This is a
contradiction. It follows that $\mathcal{A}(x)=\nu(x)x$ and, since
\[
\nu(gx)gx=\mathcal{A}(gx)=g\mathcal{A}(x)=\nu(x)gx\text{ \ \ \ for all }g\in
SO(N),
\]
we have that $\nu(\left\vert x\right\vert )=\nu(x),$ as claimed. Consequently,
$\mathcal{A}(x)=\nabla\varphi(x)$ where $\varphi(x)=\psi(|x|)$ and $\psi(r)$
is a primitive for $\nu(r)r$.

(ii) This is an immediate consequence of (i) and Proposition \ref{Sing1}.
\end{proof}

As we have pointed out in the introduction, it follows from Theorem \ref{rad}
that $SO(N)$-equivariant vector potentials can be gauged away if $N\geq3$ and
there is a one-to-one correspondence between radially symmetric solutions to
(\ref{i1}) and to a similar non-magnetic problem.

Below we shall show by a simple example that radially symmetric solutions to
\eqref{i1} may exist also when $G$ is a proper subgroup of $SO(N)$.

\begin{example}
\emph{ Let $\Omega=\mathbb{R}^{4}\smallsetminus\{0\}$ and
\[
\mathcal{A}(x):=\frac{1}{|x|^{2}}(-x_{2},x_{1},-x_{4},x_{3}).
\]
Then $\mathcal{A}$ is $SO(2)\times SO(2)$ but not $SO(4)$-invariant and, since
$\text{curl}\mathcal{A} \ne0$, $\mathcal{A}$ cannot be gauged away. Note that,
if $u=u(|x|),$ then $\nabla u(x)\cdot\mathcal{A}(x)=0$ and, therefore,
\[
(-i\nabla+\mathcal{A})^{2}u=-\Delta u+\left\vert \mathcal{A}\right\vert
^{2}u=-\Delta u+\frac{1}{\left\vert x\right\vert ^{2}}u,
\]
cf. (\ref{gradradial}). So $u$ is a radial solution to problem (\ref{point})
with $\sigma=0$ iff $u$ solves
\begin{equation}
-\Delta u+\frac{1}{\left\vert x\right\vert ^{2}}u=|u|^{2^{\ast}-2}%
u,\text{\qquad}u\in D^{1,2}(\mathbb{R}^{4},\mathbb{C})^{SO(4)}.\label{nonmag}%
\end{equation}
Theorem \ref{thmpoint} asserts that a solution to (\ref{nonmag}) exists. }
\end{example}

\end{document}